\DeclareMathAlphabet{\pazocal}{OMS}{zplm}{m}{n}
\newtheorem{theorem}{Theorem}[section]
\newtheorem{lemma}[theorem]{Lemma}
\newtheorem{proposition}[theorem]{Proposition}
\newtheorem{corollary}[theorem]{Corollary}
\theoremstyle{definition}
\newtheorem{example}[theorem]{Example}
\newtheorem{question}[theorem]{Question}
\theoremstyle{remark}
\newtheorem{remark}[theorem]{Remark}
\newtheorem{construction}[theorem]{Construction}
\newtheorem{definition}[theorem]{Definition}  
\numberwithin{equation}{section}
\numberwithin{table}{section}
\numberwithin{figure}{section}
\newcommand{\GG}{{\mathbb G}}	
\newcommand{\PP}{\mathbb{P}}         
\newcommand{\QQ} {{\mathbb Q}}		
\newcommand{\RR} {{\mathbb R}}		
\newcommand{\ZZ} {{\mathbb Z}}	
\newcommand{\GGm}{{\mathbb G}_m}	
\newcommand{\FF}{\ensuremath{\mathbb{F}}}
\renewcommand{\AA}{\ensuremath{\mathbb{A}}} 
\newcommand{\KK}{\ensuremath{\mathbb{K}}}
\DeclareMathOperator{\Gr}{Gr}
\DeclareMathOperator{\Aut}{Aut}
\DeclareMathOperator{\Span}{span}
\DeclareMathOperator{\id}{id}
\DeclareMathOperator{\Spec}{{Spec}}
\DeclareMathOperator{\Proj}{{Proj}}
\DeclareMathOperator{\inw}{in}
\DeclareMathOperator{\Trop}{Trop}
\DeclareMathOperator{\conv}{conv}
\DeclareMathOperator{\Grob}{G}
\DeclareMathOperator{\St}{Star}
\DeclareMathOperator{\TGr}{TGr}
\DeclareMathOperator{\Dr}{Dr}
\DeclareMathOperator{\PGL}{PGL}
\DeclareMathOperator{\sch}{sch}
\DeclareMathOperator{\Pic}{Pic}
\DeclareMathOperator{\face}{face}
\DeclareMathOperator{\Pa}{Pa}
\DeclareMathOperator{\sat}{sat}
\DeclareMathOperator{\sgn}{sgn}
\DeclareMathOperator{\Sec}{S}
\DeclareMathOperator{\Sym}{Sym}
\newcommand{\GrU}[2]{\Gr_0(#1,#2)}
\newcommand{\GrC}[2]{\Gr(#1,#2)}
\newcommand{\TGrU}[2]{\TGr_0(#1,#2)}
\newcommand{\DrU}[2]{\Dr(#1,#2)}
\newcommand{\XU}[2]{X_0(#1,#2)}
\newcommand{\XC}[2]{X(#1,#2)}
\newcommand{\XCL}[2]{X_{\Sec}(#1,#2)}
\newcommand{\SF}[1]{\Sigma_{\Sec}(#1)}
\newcommand{\SFU}[2]{\Sigma_{\Sec}(#1,#2)}
\newcommand{\LFU}[2]{\mathcal{S}_{#1,#2}}
\newcommand{\mat}[2]{(#1,#2)}
\newcommand{\ETrop}{\mathfrak{Trop}}
\newcommand{\charF}{\operatorname{char}}
\newcommand{\supp}{\operatorname{supp}}
\newcommand{\TC}[1]{\operatorname{TC}_{#1}}
\newcommand{\kk}{\mathbf{k}}
\newcommand{\calP}{\pazocal}
\def\cA{{\calP A}}
\def\cB{{\calP B}}
\def\cC{{\calP C}}
\def\cF{{\calP F}}
\def\cG{{\calP G}}
\def\cL{{\calP L}}
\def\cR{{\calP R}}
\def\cT{{\calP T}}
\newcommand \sign {\operatorname{sign}}
\newcommand{\init}{\operatorname{in}}
\newcommand{\qp}[3]{P_{#1}(#2,#3)}
\tikzset{%
	add/.style args={#1 and #2}{to path={%
			($(\tikztostart)!-#1!(\tikztotarget)$)--($(\tikztotarget)!-#2!(\tikztostart)$)%
			\tikztonodes}}
}
\newcommand{\tsc}[1]{\ensuremath{\Gr_{#1}}}
\newcommand{\subd}[2]{\ensuremath{\Delta_{#1,#2}}}
\newcommand{\grdiv}[2]{\ensuremath{\Gamma_{#1,#2}}}
\newcommand{\grdivF}[3]{\ensuremath{\Gamma_{#1,#2}^{#3}}}
\newcommand{\invLim}[2]{\ensuremath{\lim_{\substack{\longleftarrow\\#1 \in #2}}}}
\newcommand{\dr}[1]{\ensuremath{\operatorname{Dr}_{#1}}}
\newcommand{\MT}{\texttt{Macaulay2}}
\newcommand{\pmk}{\texttt{polymake}}
\newcommand{\sage}{\texttt{sage}}
\newcommand{\gfan}{\texttt{gfan}}
\newcommand{\zspan}[1]{\ensuremath{\ZZ\!\cdot\!#1}}
\newcommand{\rspan}[1]{\ensuremath{\RR\!\cdot\!#1}}
\newcommand{\relint}{\operatorname{rel\,int}}
\newcommand{\kSch}{\mathop{\mbox{$\kk$-$\sch$}}}
\title{Initial degenerations of Grassmannians}
\author{Daniel Corey \footnote{UW-Madison, Van Vleck Hall, 480 Lincoln Dr. Madison, WI 53706. dcorey@math.wisc.edu }}
\begin{document}
\maketitle

\begin{abstract}
	We construct closed immersions from initial degenerations of $\Gr_{0}(d,n)$---the open cell in the Grassmannian $\GrC{d}{n}$ given  by the nonvanishing of all Pl\"ucker coordinates---to limits of thin Schubert cells associated to diagrams induced by the face poset of the corresponding tropical linear space. These are isomorphisms when $(d,n)$ equals $(2,n)$, $(3,6)$ and $(3,7)$. As an application we prove $\Gr_0(3,7)$  is sch\"on, and the Chow quotient of $\Gr(3,7)$ by the maximal torus in $ \operatorname*{PGL}(7)$ is the log canonical compactification of the moduli space of 7 points in $\mathbb{P}^2$ in linear general position, making progress on a conjecture of Hacking, Keel, and Tevelev.

\smallskip
	
	\noindent \textbf{Keywords}: Grassmannians, initial degenerations, thin Schubert cells, matroids, Chow quotient, log canonical compactification.
	
	\smallskip
	
	\noindent \textbf{Mathematics Subject Classification}: 14T05 (primary), 14M15, 14E25 (secondary).
\end{abstract}

\maketitle

\section{Introduction}

Let $\GrC{d}{n}$ be the Grassmannian of $d$-dimensional subspaces of $\kk^n$, for an algebraically closed field $\kk$, and  $\GrU{d}{n}$  the open cell  given by the nonvanishing of all Pl\"ucker coordinates. We consider its tropicalization $\TGrU{d}{n}$ through two frameworks. Via Gr\"obner theory, $\TGrU{d}{n}$ consists of those $w\in \wedge^d\RR^n$ such that the initial degeneration $\inw_w\GrU{d}{n}$ is nonempty.  Alternatively, $\TGrU{d}{n}$ has a modular interpretation
as the space of $d$-dimensional tropical linear subspaces of $\RR^n$ that are realizable over valued extensions of $\kk$ \cite{SpeyerSturmfels2004a}.  The goal of this paper is to study initial degenerations of $\GrU{d}{n}$  via their relation to tropical linear spaces. 

Suppose $w\in \TGrU{d}{n}$ and $L_w$ is the corresponding tropical  linear space. Then $w$ induces a regular subdivision $\Delta_w$ of the hypersimplex $\Delta(d,n)\subset \RR^n$ into matroid polytopes, and there is a bijection between the bounded cells of $L_w$ and the internal cells of $\Delta_w$, reversing the face order \cite{Speyer2008}.
Equipping $\TGrU{d}{n}$ with its Gr\"obner fan structure,  $\inw_w\GrU{d}{n}$ and $\Delta_w$ depend only on the cone of $\TGrU{d}{n}$ that contains $w$ in its relative interior  \cite{Tevelev2007}. 

The collection of all subspaces realizing a matroid $M$ defines a locally closed subscheme  $\Gr_{M}\subset \GrC{d}{n}$ called the \textit{thin Schubert cell} of $M$.
Let us describe an inverse system of thin Schubert cells associated to the matroid subdivision $\Delta_w$. 
Given  a matroid polytope $Q\subset \Delta(d,n)$, write $M_Q$ for its matroid and  $\rho_{M_{Q}}$ for the rank function.   Any facet of $Q$ has  the form $Q' = Q\cap \{\sum_{i\in\eta} x_i = \rho_{M}(\eta)\}$ for some $\eta\subset [n]:=\{0,1,\ldots,n-1\}$, and $M_{Q'}$ decomposes as a direct sum of the contraction $M_{Q}/\eta$ and restriction $M_{Q}|\eta$ \cite{GelfandSerganova}. 
If $F\in \GrC{d}{n}$ realizes $M_Q$ and $\mu=[n]\setminus \eta$, then $F\cap \kk^{\mu}$ and $F/(F\cap \kk^{\mu})$  realize $M_{Q}/\eta$ and $M_{Q}|\eta$, respectively.  We have a morphism
\begin{align*}
\Gr_{M_Q} \to \Gr_{M_{Q'}} \hspace{30pt}
F \mapsto (F\cap \kk^{\mu}) \oplus F/(F\cap \kk^{\mu}).
\end{align*}
Thus $\{\Gr_{M_Q} \, | \, Q\in \Delta_{w}\}$ defines an inverse system, and we may form the limit $\varprojlim_{Q\in\Delta_w}\Gr_{M_Q}$. 
\begin{theorem}
	\label{thm:closedEmbeddingIntro}
	For $w\in \TGrU{d}{n}$, there is a closed immersion
	\begin{equation*}
	\psi_w:\inw_w\GrU{d}{n}\hookrightarrow \varprojlim_{Q\in\Delta_w} \Gr_{M_Q}.
	\end{equation*} 
\end{theorem}
\noindent The morphisms $\Gr_{M_{Q}} \to \Gr_{M_{Q'}}$ and limit ~$\varprojlim_{Q\in\Delta_w}\Gr_{M_Q}$ originally appear in \cite{Lafforgue2003}. This limit parameterizes collections of subspaces $\{F_Q \in \Gr_{M_Q}\, | \, Q\in \Delta_w\}$, such that, if $Q_1$ and $Q_2$ share a common face defined by $\sum_{i\in \eta}x_{i} = \rho_{M_{Q_1}}(\eta) = d-\rho_{M_{Q_2}}(\mu)$ with $\mu=[n]\setminus \eta$, then 
\begin{equation*}
F_{Q_1} / (F_{Q_1} \cap \kk^{\mu}) = F_{Q_2} \cap \kk^{\eta} \hspace{15pt} \text{ and } \hspace{15pt} F_{Q_2} / (F_{Q_2} \cap \kk^{\mu}) = F_{Q_1} \cap \kk^{\eta}
\end{equation*}
under the identifications $\kk^{n}/\kk^{\mu} = \kk^{\eta}$ and $\kk^{n}/\kk^{\eta} = \kk^{\mu}$.

In the construction of a morphism to $\varprojlim_{Q\in \Delta_w} \Gr_{M_Q}$, it suffices to construct compatible morphisms $\inw_w\GrU{d}{n} \to \Gr_{M_Q}$ whenever $Q$ is an internal cell of $\Delta_w$. Let us sketch a geometric characterization of these morphisms. Choose a $\kk$-point  $x$ of  $\inw_w\GrU{d}{n}$ and set $\KK = \kk(\!(t^{\RR})\!)$.  By surjectivity of exploded tropicalization \cite{Payne}, there is a $\KK$-point $p$ of $\GrU{d}{n}$ such that $\ETrop(p) = x$; because $\KK$ is a generalized power series field, $\ETrop(p)$ is simply the vector of lead coefficients. The Pl\"ucker vector $p$ defines a linear subspace  $F_p$ of $(\KK^*)^n$ whose tropicalization is $L_w$. For any $v$ in the bounded cell of $L_w$ dual to $Q$, the closure of $\inw_v(F_p)$ in $\kk^n$ is a linear realization of $M_Q$. The morphism $\inw_w\GrU{d}{n} \to \Gr_{M_Q}$ sends $x$ to $\inw_v(F_p)$. We will produce a scheme-theoretic construction of these morphisms in \Cref{sec:limitsTSC}, and provide compatibility with this geometric description in \Cref{rmk:indegTSCGeometric}.

Our main application of \Cref{thm:closedEmbeddingIntro} is to determine smoothness and irreducibility of initial degenerations of Grassmannians, especially for $\GrU{3}{7}$. Because $\TGrU{d}{n}$ is sensitive to the characteristic of the underlying field, we assume that $\charF \kk = 0$.

\begin{theorem}
	\label{theoremA} 
	The initial degenerations of $\GrU{3}{7}$ are smooth and irreducible.
\end{theorem}

 The computation of $\TGrU{3}{7}$ in \cite{HJJS09} allows us to compute the initial degenerations of $\GrU{3}{7}$ and matroid subdivisions of $\Delta(3,7)$.   Given the size of the initial ideals, determining smoothness and irreducibility of the $\inw_w\GrU{3}{7}$ directly is impractical, even with computer assistance. In comparison, thin Schubert cells and the morphisms between them are easier to describe, as we do in Sections \ref{sec:smoothnessOfTSC} and \ref{sec:smoothnessMorphisms}. Each $\Gr_M\subset \GrC{3}{7}$ is smooth and irreducible and the morphisms $\Gr_M\to\Gr_{M_Q}$ are  smooth and dominant with connected fibers, provided $Q$ is not a face of $\Delta(3,7)$.  This allows us to determine smoothness and irreducibility of $\varprojlim_{Q\in\Delta_w}\Gr_{M_Q}$; see Examples \ref{ex:limitTree} and \ref{ex:affineCoordinates} for an illustration of this analysis. 
Being a closed immersion of affine schemes, $\psi_w:\inw_w\GrU{d}{n} \hookrightarrow \varprojlim_{Q\in \Delta_{w}}\Gr_{M_Q}$ is an isomorphism whenever $\varprojlim_{Q\in\Delta_w}\Gr_{M_Q}$ is an integral scheme of dimension $d(n-d)$. While the inequality $\dim( \varprojlim_{Q\in\Delta_w}\Gr_{M_Q}) \geq d(n-d)$ may be strict when $d=3$ and $n\geq 9$, as demonstrated in \Cref{ex:39counterexample}, it is an equality for all $w$ in the $(3,7)$ case. This will yield a proof of \Cref{theoremA}, and our techniques will allow us to prove an analog of this theorem for any  $\Gr_M\subset \GrC{d}{n}$ for $(2,n)$, $(3,6)$, and $(3,7)$, see \Cref{thm:initDegTSC}.

As a consequence of \Cref{theoremA}, $\GrU{3}{7}$ is sch\"on in the sense of Tevelev \cite{Tevelev2007}.  This is important because, when $X_0$ is a sch\"on subvariety of a torus, we may use tropical geometry to construct compactifications of $X_0$ with desirable properties.  Indeed, the closure $X$ of $X_0$ in any toric variety whose fan has support $\Trop X_0$ is a sch\"on compactification \cite{LuxtonQu}. The strata of $X$ are sch\"on, and $(X,B:=X\setminus X_0)$ has toroidal singularities.  Hacking, Keel, and Tevelev prove that $K_X + B$ is ample if and only if each irreducible  stratum of $X$ is log minimal, and a sch\"on subvariety of a torus  is log minimal if and only if its tropicalization is not invariant under translation by a rational subspace \cite{HackingKeelTevelev2009}.
They apply this to $Y^n$,  the moduli space of smooth marked del Pezzo surfaces of degree $9-n$ for $n \leq 7$, demonstrating that the Sekiguchi cross-ratio variety $\overline{Y}^n$ \cite{Sekiguchi1, Sekiguchi2}, introduced by Naruki when $n=6$ \cite{Naruki}, is a sch\"on and log canonical compactification of $Y^n$.

While $\GrU{d}{n}$ is not log minimal, its quotient $\XU{d}{n}$ by the free action of the maximal torus $H\subset \PGL(d)$ does have this property \cite[Proposition~2.20]{KeelTevelev2006}.  Via the Gelfand-MacPherson correspondence, we interpret $\XU{d}{n}$ as the moduli space of $d$ marked points in $\PP^{n-1}$ in linear general position up to the  $\PGL(d)$-action.  
The Chow quotient  $\GrC{d}{n}/\!/ H$ compactifies $\XU{d}{n}$. Let $\XC{d}{n}$ be its normalization. Kapranov \cite{Kapranov1993} proves $\XC{2}{n} \cong \overline{M}_{0,n}$, the Grothendieck-Knudsen moduli space of genus $0$ stable $n$-marked curves. This compactification of $\XU{2}{n}$ is sch\"on \cite{Tevelev2007} and log canonical \cite{KeelMcKernan}.  Keel and Tevelev prove $\XC{3}{n}$ is not log canonical when $n\geq 9$, and together with Hacking they conjecture $\XC{3}{n}$ is a sch\"on and log canonical compactification for $\XU{3}{n}$ when $n=6,7$, and $8$ \cite[Theorem~5.7]{Tevelev2007}, \cite[Conjecture~1.6]{KeelTevelev2006}. 
Luxton handles the $n=6$ case by investigating the relationship between $\XU{3}{6}$ to $Y^6$ \cite{Luxton2008}. He proves that $\XU{3}{6}$ is sch\"on by showing that the toric strata of $\XC{3}{6}$ are smooth via a delicate analysis of how the log canonical fan of $Y^6$ maps onto $\Trop \XU{3}{6}$. A direct adaptation of Luxton's strategy does not carry over to this setting; see \Cref{rmk:Luxton}.   Instead, we determine that $\XU{3}{7}$ is sch\"on directly from \Cref{theoremA}. We use this to verify the above conjecture when $n=7$. 
\begin{theorem}
	\label{theoremB}
	The variety  $\XC{3}{7}$ is a sch\"on and log canonical compactification of $\XU{3}{7}$. 
\end{theorem}

In \Cref{sec:higherGrassmannians}, we investigate the behavior of initial degenerations of $\GrU{3}{n}$ for larger values of $n$. 
Given the relationship between thin Schubert cells and initial degenerations of $\GrU{d}{n}$, it is reasonable to expect that in general $\GrU{d}{n}$ will have initial degenerations that are not smooth or reducible. Indeed, the Perles matroid (see  \Cref{fig:higherGrassmannians})  is a rank 3 matroid $P$ on $[9]$ such that $\Gr_{P}$ is reducible. 
We use this to find an initial degeneration of $\GrU{3}{9}$ with the same property.  
\begin{theorem}
	\label{theoremC}
	The Grassmannian $\GrU{3}{9}$ has an initial degeneration with two connected components.  
\end{theorem}

We conclude  with three appendices. \Cref{appendix:SDC} gathers various properties of morphisms that are smooth and dominant with  connected fibers. It also includes a discussion on finite limits of $\kk$-schemes.  \Cref{appendix:37Data} contains a table with the data necessary for the proof of \Cref{lm:centerlimit}.  \Cref{appendix:MAC}, written by Mar\'ia Ang\'elica Cueto, includes various arguments that reduce the study of thin Schubert cells $\Gr_{M}$, and the morphisms $\Gr_{M} \to \Gr_{M'}$ to the case where $M$ is a simple and connected matroid, and a treatment of the rank 2 case.  It also includes an argument that the limits of thin Schubert cells over $\Delta_{w}$ may be computed on the smaller poset consisting of cells that have codimension at most one.

\subsection*{Conventions} 
\noindent Throughout, $\kk$ will denote an algebraically closed field of characteristic $0$. We fix the assumption on the characteristic because of the dependence on computer calculations.    However, the proof of \Cref{thm:closedEmbeddingIntro} works for all characteristics, and we expect that \Cref{theoremA} and \Cref{theoremC} remain true provided $\charF \kk\neq 2 $ or $5$ respectively.

\subsection*{Computations} 
\noindent The software packages~\gfan~\cite{gfan}, ~\MT~
\cite{M2},~\pmk~\cite{polymake:2000}, and~\sage~\cite{sagemath} in the proofs of \Cref{prop:3nDimension},  \Cref{lm:centerlimit}, and \Cref{lem:raysOrigin}. The matroid subdivisions in Examples \ref{ex:limitTree}, \ref{ex:affineCoordinates}, \ref{ex:37counterexample},  \ref{ex:39counterexample} and \Cref{pr:Perles2} are computed using~\pmk. No computation takes longer than a few hours on a standard desktop computer. The code may be found at the following website. 
\begin{center}
	\url{https://github.com/dcorey2814/initialDegenerationsOfGrassmannians}
\end{center}

\subsection*{Acknowledgments} I am especially grateful to my Ph.D. advisor Sam Payne for his guidance throughout the course of this project,  and for his feedback on earlier versions on this paper.  Some arguments in \Cref{sec:smoothnessMorphisms}, especially the proofs of Lemmas \ref{lem:smooth} and \ref{lem:smooth2}, are improvements of those found in an earlier version of this paper \cite{Corey17}, inspired by conversations with Mar\'ia Ang\'elica Cueto.  I also express my gratitude to her for contributing \Cref{appendix:MAC}. 
I thank Dustin Cartwright, Netanel Friedenberg, Dhruv Ranganathan, Yue Ren, Jenia Tevelev, and Jeremy Usatine for many helpful conversations, as well as David Jensen for comments on an earlier draft. I would like to extend my gratitude to the hospitality of the Fields Institute for Research in Mathematical Sciences and the organizers of the Major Thematic Program on Combinatorial Algebraic Geometry (July-December 2016) where this project began. This research was partially supported by NSF grant CAREER DMS-1149054 (PI: Sam Payne) and NSF RTG Award DMS–1502553.

\section{Preliminaries}\label{sec:Preliminaries}
\subsection{Initial Degenerations}
\noindent We recall some facts about initial degenerations and tropicalization of varieties defined over a trivially valued field from the Gr\"obner-theoretic perspective, see  \cite[Chapters~2,~3]{MaclaganSturmfels2015} for a comprehensive treatment, including the non-trivially valued case. Let $X$ be the closed subvariety of $\PP^{a} = \Proj(\kk[t_0,\ldots, t_a])$ with homogeneous ideal $I\subset \kk[t_0,\ldots, t_a]$. Assume that $X$ meets the dense torus $T$.  Set $X_0 = X\cap T$ and $I_0 = I\cdot \kk[t_0^{\pm},\ldots,t_a^{\pm}]$. We will often find it easier to work with $\Spec(\kk[t_0^{\pm},\ldots,t_a^{\pm}]/I_0)$. This space is $\pi^{-1}(X_0)$, where $\pi:\AA^{a+1}\setminus \{0\}\to \PP^a$ is the natural projection. Note that $\pi^{-1}(X_0) \cong X_0\times \GGm$.

Let  $N\cong \ZZ^{a+1}/\zspan{\mathbf{1}}$  denote the cocharacter lattice of $T$, where $\mathbf{1} = (1,\ldots,1)$. For $z = (z_0, \ldots, z_a)$, we write $t^{z} = t^{z_0}\cdots t^{z_a}$. The \textit{initial form} of $f\in \kk[t_0^{\pm}, \ldots, t_a^{\pm}]$ with respect to $w\in (N_{T})_{\RR}:=(N_{T})\otimes_{\ZZ} \RR$ is
\begin{equation*} 
\inw_w f = \sum_{w:\left\langle w,z \right\rangle \text{ minimal}} a_zt^z \hspace{10pt} \text{ where } \hspace{10pt} f = \sum a_z t^z. 
\end{equation*} 
That is, $\inw_wf$ is the sum of all monomials $a_zt^z$ of $f$ with minimal $w$-weight.  The \textit{initial ideals} of $I_0$ and  $I$ with respect to $w$ are
\begin{equation*} 
\inw_w I_0 = \langle \inw_wf \, | \, f\in I_0 \rangle  \hspace{10pt} \text{ and } \hspace{10pt} \inw_w I = \langle \inw_wf \, | \, f\in I \rangle,
\end{equation*} 
respectively. The \textit{initial degeneration} of $X_0$ with respect to $w$ is  
\begin{equation*}
\inw_wX_0 = T\cap \Proj(\kk[t_0, \cdots, t_a]/\inw_w I).
\end{equation*}

There is a complete polyhedral fan $\Sigma_{\Grob}(X_0)$ in  $N_{\RR}$, called the \textit{Gr\"obner fan}, where $w$ and $w'$ belong to the relative interior of the same cone in $\Sigma_{\Grob}(X_0)$ if and only if $\inw_w I = \inw_{w'} I$  \cite[Theorem~1.2]{Sturmfels1996}.   The \textit{tropicalization} of $X_0$ is  
\begin{equation*} 
\Trop X_0 = \left\lbrace w\in (N_T)_{\RR} \ | \ \inw_w I_0 \neq \left\langle 1 \right\rangle   \right\rbrace.  
\end{equation*}    
When $X_0$ is irreducible,  $\Trop X_0$ is the support of a pure $\dim (X_0)$-dimensional subfan of $\Sigma_{\Grob}(X_0)$. Denote the restriction of  $\Sigma_{\Grob}(X_0)$ to  $\Trop X_0$ by $\cG_{X_0}$. While $\inw_wX_0$ depends only on the cone of $\cG_{X_0}$ containing $w$ in its relative interior, it is possible that $\inw_wX_0 = \inw_{w'}X_0$ when $w$ and $w'$ belong to distinct locally closed cones. In this case, $\inw_wI$ and $\inw_{w'}I$ differ by primary components contained in $\langle t_0,\ldots,t_a\rangle$. 

\subsection{Matroid polytopes} \label{sec:matroids}
\noindent We assume familiarity with basic notions of matroids and refer the reader to \cite{Oxley1992} for a detailed treatment. For brevity, we say that a rank $d$ matroid on $[n]$ is a $\mat{d}{n}$-matroid. Given a matroid $M$, we write $\cB(M)$ for its set of bases and $\rho_M$ for its rank function. The uniform $\mat{d}{n}$-matroid is denoted by $U(d,n)$. For $\eta\subset [n]$, $M/\eta$ denotes the contraction of $M$ by $\eta$ and $M|\eta$ denotes the restriction of $M$ to $\eta$.

Let $e_0,\ldots, e_{n-1}$ denote the standard basis of $\RR^n$, and for a subset $\lambda = \{\lambda_0, \ldots, \lambda_{k}\} $ of $[n]$, let $e_{\lambda} =  e_{\lambda_0} + \cdots + e_{\lambda_{k}}.$ 
The \textit{hypersimplex} $\Delta(d,n)$ is the polytope in $\RR^n$ defined by
\begin{equation} 
\label{eq:hypersimplex}
\Delta(d,n) = \left\lbrace  (x_0, \ldots, x_{n-1}) \in \RR^n \, \left| \, x_{[n]} = d,\ 0\leq x_i\leq 1 \right.\right\rbrace.
\end{equation} 
The vertices of $\Delta(d,n)$ are  the points $e_{\lambda}$ for $\lambda\in {[n]\choose d}:=\{\sigma\subset [n]\, | \, |\sigma|=d \}$. The \textit{matroid polytope} of $M$ is
\begin{equation} 
\label{eqn:matroidpolytope}
Q_M = \left\lbrace  (x_0, \ldots, x_{n-1}) \in \RR^n \, \left| \,   x_{[n]} = d, \ x_{\eta} \leq \rho_M(\eta),\ \eta \subset [n]\right.\right\rbrace.
\end{equation} 
The vertices of $Q_{M}$ are the points $e_{\beta}$ for $\beta \in \cB(M)$. Given a collection of vertices of $\Delta(d,n)$, its convex hull $Q$  is a matroid polytope if and only if every edge of $Q$ is parallel to some $e_i-e_j$ \cite[Theorem~4.1]{GGMS}; we write $M_Q$ for the corresponding matroid. In particular, any face of a matroid polytope is a matroid polytope.

Throughout, we will consider the face order on polytopes: $Q'\leq Q$ whenever $Q'$ is a face of $Q$, and $Q'\lessdot Q$ when $Q'$ is a facet of $Q$. This induces a partial order on the set of $\mat{d}{n}$-matroids: $M'\leq M$ whenever $Q_{M'}\leq Q_{M}$, and $M'\lessdot M$ if $Q_{M'} \lessdot Q_{M}$. Given $\eta\subset [n]$, let $M_{\eta} = M_{Q'}$ where  $Q'=Q_M\cap \{x_{\eta} = \rho_{M}(\eta)\}$. The bases of $M_{\eta}$ are
\begin{equation*}
\cB(M_{\eta}) = \{\beta\in \cB(M) \, | \, |\beta \cap \eta| = \rho_M(\eta)  \},
\end{equation*}
and the remaining $ \beta \in \cB(M) \setminus \cB(M_{\eta})$ satisfy $|\beta\cap\eta| < \rho_M(\eta)$. It is not hard to produce an isomorphism $M_{\eta} \cong  M/\eta \oplus M|\eta$. 
When $M$ is connected, a nonempty subset $\eta$ is \textit{nondegenerate} if $M/{\eta}$ and $M|\eta$ are connected. The following proposition may be found in \cite[Section~2.5]{GelfandSerganova}.
\begin{proposition}
	\label{prop:facesMatroidPolytope}
	If $M$ be a connected matroid on $[n]$, then $\eta \mapsto Q_{M_{\eta}}$ is a one-to-one correspondence between nondegenerate subsets $\eta$ and the facets of $Q_M$.
\end{proposition}

\noindent Finally, we remark that  if $w\in \ZZ^n$ and $M_w$ is the matroid of minimal $w$-weight as in \cite{ArdilaKlivans}, then our $M_{\eta}$ is just $M_{-\chi(\eta)}$ where $\chi$ is the characteristic function.

\subsection{Thin Schubert cells} 
The Grassmannian $\GrC{d}{n}$ of $d$-dimensional linear subspaces of $\kk^n$ is a subvariety of $\PP(\wedge^d\kk^n)$ via the Pl\"ucker embedding. We write $\kk[p_{\lambda}]$ for the homogeneous coordinate ring of $\PP(\wedge^d\kk^n)$, $I^{d,n}\subset \kk[p_{\lambda}]$ for the Pl\"ucker ideal, and $p_{\lambda}(F)$ for the $\lambda$-th Pl\"ucker coordinate of $F\in \GrC{d}{n}$. As observed by \cite{GGMS}, $\GrC{d}{n}$ decomposes into locally closed subschemes $\Gr_M$ called \textit{thin Schubert cells} which are indexed by $\kk$-realizable $\mat{d}{n}$-matroids. 
 Set-theoretically, 
\begin{equation*} 
\Gr_M = \{F\in \GrC{d}{n} \, |\, p_{\lambda}(F) \neq 0 \text{ if and only if } \lambda \in \cB(M)\}.
\end{equation*} 
Observe that $\GrU{d}{n} = \Gr_{U(d,n)}$. We realize $\Gr_M$ as a scheme in the following way. Define 
\begin{itemize}[noitemsep]
	\item $B_M = \kk[p_{\lambda} \, | \, \lambda \in \cB(M)] \subset \kk[p_{\lambda}]$,
	\item $I_{M} = \left(I^{d,n} + \left\langle p_{\lambda}\, |\, \lambda \in {[n]\choose d} \setminus  \cB(M)\right\rangle \right) \cap B_M$, 
	\item $S_M$ the multiplicative semigroup of $B_M$ generated by $p_{\lambda}$ such that $\lambda \in \cB(M)$, and
	\item $R_M = S_M^{-1}B_M/I_M$.
\end{itemize}
Then 
\begin{equation*}
\Gr_M = T(M) \cap \Proj(B_M/I_M)
\end{equation*}
where $T(M)$ is the dense torus of $\Proj(B_M)$. For computations, we will often find it easier to work with $\Spec(R_M) \cong \Gr_M \times \GGm$. The ideal $I_M$ is generated by 
\begin{equation} 
\label{eqn:pluckergenerator}
\qp{M}{\mu}{\nu} = \sum_{i:\mu \cup i, \nu\setminus i \in \cB(M) } \sgn(i;\mu,\nu) p_{\mu \cup i} p_{\nu\setminus i} 
\end{equation}
where $\mu \in {[n]\choose d-1}$ is independent and $\nu \in {[n]\choose d+1}$, $\nu\not\subset\mu$ has rank $d$ \cite[Equation~4.4.1]{MaclaganSturmfels2015}. Here, $\sgn(i;\mu,\nu)$ equals $(-1)^{\ell}$ where $\ell$ is the number of $j\in \nu$ with $i<j$ plus the number of elements $j'\in \mu$ such that $i>j'$. 
The coordinate ring of $\Gr_M$ can be presented with far fewer generators and relations by using affine coordinates with respect to a fixed basis, which we now describe.

\begin{construction}
	\label{construction}
	Suppose $M$ is a $\kk$-realizable $\mat{d}{n}$-matroid. Let $\beta = \{b_0<\cdots<b_{d-1}\}$ be a basis, $\gamma = \{c_0<\ldots<c_{n-d-1}\}$ its complement, and $\kk[x_{ij}]:=\kk[x_{ij}\ |\ 0\leq i < d,\; 0\leq j < n-d]$. Define a matrix $X$ in the following way. The submatrix of $X$ formed by the columns from $\beta$ is the identity matrix, and the submatrix formed by the  columns from $\gamma$ has $(i,j)$-entry equal to $x_{ij}$. For example, if $\beta = [d]$, then 
	\begin{equation*} X = \begin{pmatrix}
	1 & 0 & \cdots &0  & x_{00} & x_{01} &\cdots & x_{0,n-d-1}  \\
	0 & 1 & \cdots &0 & x_{10} & x_{11} & \cdots & x_{1,n-d-1} \\
	\cdots & \cdots & \cdots &\cdots & \cdots & \cdots & \cdots & \cdots \\
	0 & 0 & \cdots& 1 & x_{d-1,0} &x_{d-1,1} & \cdots & x_{d-1,n-d-1} \\
	\end{pmatrix}. \end{equation*} 
	Given $\lambda\in {[n]\choose d}$, let  $X_{\lambda}$ be $d\times d$ the minor of $X$ formed by the columns from $\lambda$. For $i \in [d]$ and $j\in [n-d]$, define $\lambda_{ij} \in {[n]\choose d}$ by 
	\begin{equation*} \lambda_{ij}  = (\beta\setminus \{b_i\})\cup\{c_j\}. \end{equation*} 
	Then $x_{ij} = (-1)^{\ell} X_{\lambda_{ij}}$ where $\ell$ is the number of elements of $\beta$ strictly between $b_i$ and $c_j$. We define
	\begin{itemize}[noitemsep]
		\item $B_M^x  = \kk[x_{ij} \, | \, \lambda_{ij} \in \cB(M)] \subset \kk[x_{ij}]$, 
		\item $I_M^x = \left\langle X_{\lambda} \, | \, \lambda \in {[n]\choose d} \setminus \cB(M) \right\rangle \cap B_M^x$, and
		\item $S_M^{x}$ the multiplicative semigroup in $B_M^x$ generated by $\overline{X}_{\lambda}:=\pi_M(X_{\lambda})$ for $\lambda\in \cB(M)$, where $\pi_M:\kk[x_{ij}] \to \kk[x_{ij}] / \langle x_{ij} \, | \, \lambda_{ij} \notin \cB(M) \rangle \cong B_{M}^x$ is the quotient map. 
	\end{itemize}
	Then the coordinate ring of $\Gr_M$ is isomorphic to  $R_M^{x} :=(S_M^{x})^{-1} B_{M}^x/I_{M}^{x}$.
\end{construction}

Thin Schubert cells behave well with respect to duality and direct sum of matroids. If $M^{*}$ is the dual of $M$, then $\Gr_{M^*} \subset \GrC{n-d}{n}$, and $\Gr_M \cong \Gr_{M^{*}}$ under the isomorphism $\GrC{d}{n} \cong \GrC{n-d}{n}$. If $M$ decomposes as $M = M_1\oplus M_2$, then $\Gr_M \cong \Gr_{M_1} \times \Gr_{M_2}$ \cite[Proposition~9.4]{KatzMatroid}.

\subsection{Matroid subdivisions and the tropical Grassmannian} \label{sec:TropGrassmannian}
\noindent Throughout, we will use the following abbreviations: $\TGr_M = \Trop \Gr_M$, $\cG_M = \cG_{\Gr_{M}}$, $\TGrU{d}{n} = \Trop \GrU{d}{n}$ and $\cG_{d,n} = \cG_{\Gr_{U(d,n)}}$.  
Given a polytope $P\subset \RR^n$ with vertices $u_{0},\ldots,u_k$ and $w\in \RR^{k+1}$, define
\begin{equation*} 
P_{w} = \conv\{(u_i, w_{i}) \, |\, 0\leq i\leq k  \}.
\end{equation*}
Any lower face of $P_{w}$ is of the form 
\begin{equation*}
\label{eq:lowerFaceDeltaM}
\face_{v}(P_w) = \{ x \in P_{w}  \, | \, \langle x, \mathbf{v} \rangle \leq \langle y,\mathbf{v} \rangle \text{ for all } y\in P_{w}   \}
\end{equation*}
where $v\in \RR^n$ and  $\mathbf{v} = (v,1)$. The lower faces of $P_{w}$ project onto $P$, forming a polyhedral complex  whose support is $P$. This is called the \textit{regular subdivision} of $P$ induced by $w$. The \textit{secondary fan}  $\SF{P}$ of $P$ is the complete fan in $\RR^{k+1}$ where $w$ and $w'$ belong to the relative interior of the same cone if and only if they induce the same regular subdivision on $P$ \cite{GKZ}. 
The \textit{adjacency graph} of this subdivision is the graph with vertex $v_Q$ for each maximal cell $Q$ and an edge between $v_{Q}$ and $v_{Q'}$ whenever $Q$ and $Q'$ share a common facet.

Given a $\mat{d}{n}$-matroid $M$ and $w\in \RR^{\cB(M)}$, we write $\Delta_{M,w}$ for the regular subdivision of $Q_M$ induced by $w$.  This subdivision is \textit{matroidal}, or $\Delta_{M,w}$ is a \textit{matroid subdivision},  if  each $Q\in\Delta_{M,w}$ is a matroid polytope. The \textit{Dressian} of $M$ is the subfan of $\SF{Q_M}$ defined by
\begin{equation*}
\Dr_M = \left\{w\in \RR^{\cB(M)} \, | \, \Delta_{M,w} \text{ is matroidal} \right\}.
\end{equation*}
We write $\Delta_w = \Delta_{U(d,n),w}$ and  $\DrU{d}{n}= \Dr_{U(d,n)}$. 
Let $\Delta_{M,w}$ be a matroid subdivision, $Q\in \Delta_{M,w}$ determined by $\face_{v}(Q_M)$, and  $u\in \RR^{\cB(M)}$  the vector with coordinates $u_{\lambda} = \langle v,e_{\lambda} \rangle + w_{\lambda}$. Then 
\begin{equation}
\label{eq:basesCell}
\cB(M_Q) = \{\lambda\in \cB(M) \, | \, u_{\lambda} \leq u_{\lambda'} \text{ for all } \lambda'\in \cB(M) \}.
\end{equation}

If $w\in \TGr_M$, then $\Delta_{M,w}$ is  matroidal \cite[Lemma~4.4.6]{MaclaganSturmfels2015}. In fact,
the inclusion $\TGr_M\subset\Dr_M$ is a morphism of fans (when $M$ is the uniform matroid, this is \cite[Theorem~5.4]{Tevelev2007}), thus $\inw_w\Gr_M$ and $\Delta_{M,w}$ depend only on the cone of $\cG_M$ containing $w$ in its relative interior. We have $\TGrU{2}{n} = \DrU{2}{n}$ as fans, and $\TGrU{3}{6}$ is a refinement of $\DrU{3}{6}$ \cite{SpeyerSturmfels2004a}. Because $\charF \kk \neq 2$, $\TGrU{3}{7}$ is a refinement of a subfan of $\DrU{3}{7}$  \cite[Theorems~2.1, 2.2]{HJJS09}. 
Therefore, there is a coarser fan structure on $\TGrU{3}{n}$ when $n=6,7$, which we denote by  $\LFU{3}{n}$.

\section{Limits of thin Schubert cells} 
\label{sec:limitsTSC}

\noindent In this section, we construct closed immersions  
\begin{equation*}
\inw_w\Gr_M \hookrightarrow \varprojlim_{Q\in \Delta_{M,w}} \Gr_{M_Q}
\end{equation*}
for  any $\kk$-realizable matroid $M$ and $w\in \TGr_M$, proving \Cref{thm:closedEmbeddingIntro}. We begin with a discussion of the morphsms between thin Schubert cells. Let $Q\subset \Delta(d,n)$ be a matroid polytope, $Q'$ the face defined by the equation $x_{\eta} = \rho_{M_Q}(\eta)$, and $\mu=[n]\setminus \eta$. As discussed in the introduction, $F \mapsto (F\cap \kk^{\mu}) \oplus F/(F\cap \kk^{\mu})$ characterizes the map $\Gr_{M_Q} \to \Gr_{M_{Q'}}$ set theoretically. From the canonical isomorphism 
\begin{equation*}
\wedge^d F \cong \wedge^{d-\rho_{M_Q}(\eta)} (F \cap \kk^{\mu})
 \otimes \wedge^{\rho_{M_Q}(\eta)} F/(F\cap \kk^{\mu})   
\end{equation*}
we see that $\Gr_{M_Q} \to \Gr_{M_{Q'}}$ is induced by the projection $\kk^{\cB(M_Q)} \to \kk^{\cB(M_{Q'})}$ \cite[Proposition~I.6]{Lafforgue2003}. 
We derive a scheme-theoretic characterizations of these morphisms.

\begin{proposition}
	\label{prop:facemaps} 
	Suppose $M'\leq M$ are $\mat{d}{n}$-matroids. The inclusion $B_{M'} \subset B_M$ induces a 
	 morphism of schemes $\varphi_{M,M'}:\Gr_{M} \rightarrow \Gr_{M'}$. Furthermore, these morphisms satisfy $\varphi_{M,M''} = \varphi_{M',M''}\varphi_{M,M'}$  if $M''\leq M' \leq M$ and $\varphi_{M,M} = \id$.
\end{proposition}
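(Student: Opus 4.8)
The plan is to construct $\varphi_{M,M'}$ as a torus degeneration and to verify the one algebraic point it needs: that the inclusion $B_{M'}\subseteq B_M$ sends $I_{M'}$ into $I_M$. Since $\Delta_{M'}$ is a face of $\Delta_M$, the description \eqref{eqn:matroidpolytope} lets us write $\Delta_{M'} = \Delta_M\cap\bigcap_{s=1}^{r}\{x_{\eta_s}=\rho_M(\eta_s)\}$ for finitely many subsets $\eta_1,\dots,\eta_r\subseteq[n]$; comparing vertices then gives $\cB(M') = \bigcap_s\cB(M_{\eta_s}) = \{\beta\in\cB(M):|\beta\cap\eta_s|=\rho_M(\eta_s)\text{ for all }s\}$, so in particular $\cB(M')\subseteq\cB(M)$ and $B_{M'}\subseteq B_M$ as in the statement. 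Writing $\phi_N\colon\kk[p_\lambda]\twoheadrightarrow B_N$ for the specialization that fixes $p_\lambda$ for $\lambda\in\cB(N)$ and kills $p_\lambda$ for $\lambda\notin\cB(N)$, the definition of $I_N$ unwinds to $I_N = \phi_N(I^{d,n})$; consequently $I_{M'}$ is generated by the elements $\phi_{M'}(f)$, where $f = \sum_{i\in\mu\setminus\lambda}\sgn(i;\lambda,\mu)\,p_{\lambda\cup i}\,p_{\mu\setminus i}$ runs over the quadratic Pl\"ucker generators of $I^{d,n}$ and $\phi_{M'}(f)$ is the sub-sum over those $i$ with $\lambda\cup i,\ \mu\setminus i\in\cB(M')$, as in \eqref{eqn:pluckergenerator}.

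For the containment $I_{M'}\subseteq I_M$, fix positive integers $c_1,\dots,c_r$ and set $w_\lambda = \sum_{s=1}^r c_s\,|\lambda\cap\eta_s|$ for $\lambda\in\Lambda(d,n)$. Since $w_\lambda=\sum_{i\in\lambda}a_i$ with $a_i=\sum_{s\,:\,i\in\eta_s}c_s$, the rule $p_\lambda\mapsto t^{w_\lambda}p_\lambda$ is (up to the sign convention for the $\GL(n)$-action on Pl\"ucker coordinates) the action on $\kk[p_\lambda]$ of a cocharacter of the maximal torus $H\subseteq\GL(n)$, hence this $\GGm$-action preserves $I^{d,n}$. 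As $\phi_M$ intertwines the induced $\GGm$-actions on $\kk[p_\lambda]$ and on $B_M$, the ideal $I_M=\phi_M(I^{d,n})$ is $\GGm$-stable, so it is homogeneous for the grading of $B_M$ in which $\deg p_\lambda = w_\lambda$. Now for a Pl\"ucker generator $f$ as above, $|\beta\cap\eta_s|\leq\rho_M(\eta_s)$ for $\beta\in\cB(M)$, so every monomial surviving in $\phi_M(f)$ has $w$-degree $w_{\lambda\cup i}+w_{\mu\setminus i}\leq 2\sum_s c_s\rho_M(\eta_s)$, with equality exactly when $\lambda\cup i$ and $\mu\setminus i$ both lie in $\cB(M')$. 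Hence the top $w$-degree homogeneous component of $\phi_M(f)$ is precisely $\phi_{M'}(f)$, which therefore lies in $I_M$ because $I_M$ is $w$-homogeneous. As these elements generate $I_{M'}$, we get $I_{M'}\subseteq I_M$.

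It remains to assemble the morphism. The inclusion $B_{M'}\subseteq B_M$ now descends to a homomorphism of graded $\kk$-algebras $B_{M'}/I_{M'}\to B_M/I_M$. Each $p_\lambda$ with $\lambda\in\cB(M')$ is invertible on $\Gr_M\subseteq T_M$, so this homomorphism induces a morphism $\Gr_M\to\Proj(B_{M'}/I_{M'})$, and the same invertibility forces its image into the dense torus $T_{M'}$, hence into $\Gr_{M'}=T_{M'}\cap\Proj(B_{M'}/I_{M'})$; this is $\varphi_{M,M'}$. Finally, if $M''\leq M'\leq M$ the inclusions $B_{M''}\subseteq B_{M'}\subseteq B_M$ compose to the inclusion $B_{M''}\subseteq B_M$, and since each of $\varphi_{M,M''}$, $\varphi_{M',M''}$, $\varphi_{M,M'}$ is the morphism induced by the corresponding inclusion of coordinate rings, functoriality of the $\Proj$ construction on the open loci where the relevant coordinates are units yields $\varphi_{M,M''}=\varphi_{M',M''}\circ\varphi_{M,M'}$.

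I expect the containment $I_{M'}\subseteq I_M$ to be the only real obstacle; it is false for a general coordinate specialization of polynomial rings, and the proof above genuinely uses both the $H$-invariance of $I^{d,n}$ and the face structure — via $\cB(M')=\bigcap_s\cB(M_{\eta_s})$ — to guarantee that the extreme-weight part of each Pl\"ucker generator is exactly the matching generator of $I_{M'}$, and nothing larger. Everything else is formal bookkeeping.
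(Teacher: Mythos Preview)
Your argument is correct. Both you and the paper establish the key point that every Pl\"ucker-type generator $\phi_{M'}(f)$ of $I_{M'}$ already lies in $I_M$; the paper reduces to a single facet $M'=M_\eta$ and shows by an elementary case analysis (on whether the witness index $i_0$ lies in $\eta$) that whenever $\lambda\cup i,\mu\setminus i\in\cB(M)$ they also lie in $\cB(M_\eta)$, so the two sums literally coincide. Your route reaches the same conclusion more conceptually: the weight $w_\lambda=\sum_s c_s|\lambda\cap\eta_s|$ comes from a cocharacter of $H$, hence $I_M$ is $w$-homogeneous, and the degree-$2\sum_s c_s\rho_M(\eta_s)$ component of $\phi_M(f)$ is exactly $\phi_{M'}(f)$. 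In fact, since each Pl\"ucker generator $f$ is itself $w$-homogeneous (the multiset $\lambda\cup i\sqcup\mu\setminus i$ equals $\lambda\sqcup\mu$ independently of $i$), your argument secretly proves the same dichotomy the paper proves directly: either $\phi_{M'}(f)=\phi_M(f)$ or $\phi_{M'}(f)=0$. The advantage of your packaging is that it handles an arbitrary face $M'\leq M$ in one stroke and replaces the case split by the single observation that $I^{d,n}$ is $H$-invariant; the paper's advantage is that it is completely elementary and avoids invoking the torus action.
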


\begin{proof}
	It suffices to consider the case  $M' = M_{\eta}$ for some $\eta\subset [n]$. 
	 We must show that $I_{M_{\eta}}$ maps to $I_M$ under the inclusion $B_{M_{\eta}}\subset B_M$. We will do this using the  generators for $I_{M_{\eta}}$ and $I_M$ given by \Cref{eqn:pluckergenerator}.  Suppose $\mu \in {[n]\choose d-1}$ is independent in $M_{\eta}$, and $\nu \in {[n]\choose d+1}$ not containing $\mu$ such that  $\rho_{M_{\eta}}(\nu) = d$. Note that $\mu$ is independent in $M$ and $\rho_M(\nu) = d$ because $\cB(M_{\eta}) \subset \cB(M)$.  We must show $\qp{M_{\eta}}{\mu}{\nu} = 0$ or $\qp{M}{\mu}{\nu}$. 
	  If $\qp{M_{\eta}}{\mu}{\nu} \neq 0$, then there is a $i_0\in \nu\setminus \mu$ such that both $\mu \cup i_0$ and $\nu\setminus i_0$ are in $\cB(M_{\eta})$, thus 	$ |(\mu \cup i_0) \cap \eta| $ and $ |(\nu \setminus i_0) \cap \eta| $ both equal  $r:=\rho_{M}(\eta)$. In particular,
	\begin{enumerate}[noitemsep]
		\item $|\mu \cap \eta| = r-1 $ and $|\nu \cap \eta| = r+1$ if  $i_{0}\in \eta$, or
		\item $|\mu \cap \eta| = r $ and $|\nu \cap \eta| = r$ if  $i_{0}\notin \eta$.
	\end{enumerate}

	\noindent For each $i \in \nu\setminus \mu$, we must show that $\mu \cup i$ and $\nu\setminus i $ are in $\cB(M_{\eta})$ if and only if they are both in $\cB(M)$. Since $\cB(M_{\eta}) \subset \cB(M)$, we need only show the ``if'' direction. 
	
	Suppose $\mu \cup i$ and $\nu \setminus i$ are bases of $M$. By the characterization of $Q_M$ in \Cref{eqn:matroidpolytope}, 
	\begin{equation}
	\label{eqn:lambdamui} 
	|(\mu \cup i) \cap \eta| \leq r\text{ and }  |(\nu \setminus i) \cap \eta| \leq r \end{equation}
	We show that they both equal $r$ by considering the possibilities of $|\mu\cap\eta|$ and $|\nu\cap\eta|$ as above. If $i_0 \in \eta$, then $|\mu \cap \eta| = r-1 $ and $|\nu \cap \eta| = r+1$. By \Cref{eqn:lambdamui}, we have that $|(\nu\setminus i)\cap\eta|  = r$. In particular, $i\in \eta$, so $|(\mu\cup i)\cap\eta|  = r$. If $i_0 \notin \eta$, then $|\mu \cap \eta| = r $ and $|\nu \cap \eta| = r$.  By \Cref{eqn:lambdamui}, we have that $|(\mu\cup i)\cap\eta|  = r$. In particular, $i\notin \eta$, so $|(\nu \setminus i) \cap \eta| = r$. 
\end{proof}

\begin{proposition}
	\label{prop:facemapsA}
	The induced morphism $\varphi_{M,M'}^{\#}:R_{M'}^x \to R_M^x$ is given by the inclusion  $B_{M'}^x \subset B_{M}^x$.
\end{proposition}

\begin{proof}
	Suppose $[d]$ is a basis of $M$ and $M'$. Setting  $\tilde{R}_{M} = S_{M}^{-1}\kk[p_{\lambda}/p_{[d]} \, | \, \lambda \in \cB(M)]/ I_M$, we see that $\theta_M:\tilde{R}_{M} \to R_M^x$
	given by $\theta_M(p_{\lambda}/p_{[d]}) = X_{\lambda}$ is an isomorphism (the inverse sends  $x_{ij}$ to $p_{\lambda_{ij}}/p_{[d]}$). By  \Cref{prop:facemaps}, $\varphi_{M,M'}$ is induced by the ring map $\psi_{M,M'}:\tilde{R}_{M'} \to \tilde{R}_M$ that sends $p_{\lambda}$ to itself.  Therefore,  $\theta_{M} \psi_{M,M'} \theta_{M'}^{-1}$ sends $x_{ij}$ to itself (for $\lambda_{ij}\in \cB(M)$), as required.
\end{proof}

Fix a $(d,n)$-matroid $M$  and $w\in \TGr_{M}$.  By  \Cref{prop:facemaps}, $\{\Gr_{M_{Q}} \, | \, Q\in \Delta_{M,w} \}$ defines an inverse system.  We may form $\varprojlim_{Q\in \Delta_{M,w}}\Gr_{M_{Q}}$, which we denote by $\Gr_{M,w}$, and write $\varphi_{Q}:\Gr_{M,w} \rightarrow \Gr_{M_Q}$ for the structure morphism.  When $M=U(d,n)$ we write $\Gr_w = \Gr_{U(d,n),w}$. Finite limits exist in the category of affine schemes because this category has fiber products and a terminal object \cite[Proposition~5.21]{Awodey}.

\begin{lemma}
	\label{lem:mapInitialDegToTSC}
	Suppose $w\in \TGr_{M}$ and $Q \in \Delta_{M,w}$. The inclusion $B_{M_Q} \subset B_M$ induces a 
	 morphism $\psi_{M,M_Q,w}:\inw_w\Gr_M \to \Gr_{M_Q}$. 
\end{lemma}

\begin{proof}
	Suppose $Q$ is determined by $\face_v(Q_{M})$. \Cref{eq:basesCell} records the bases  of $M_Q$.   We must show that $I_{M_Q}$ maps to $\inw_wI_M$ under the inclusion $B_{M_Q}\subset B_{M}$.  For this, it suffices to consider the quadratic generators $\qp{M_Q}{\mu}{\nu}$ from \Cref{eqn:lambdamui}. Let $\mu\in {[n] \choose d+1}$ with $\rho_{M_Q}(\mu) = d$ and $\nu\in {[n] \choose d-1}$ independent in $M_Q$. If $\qp{M_Q}{\mu}{\nu} \neq 0$, there  is a $i_{0}\in \mu\setminus \nu$ such that $\mu\setminus i_{0}, \nu\cup i_0 \in \cB(M_Q)$.  Because $\cB(M_Q)\subset \cB(M)$, $\mu,\nu,i_0$ satisfy the same properties for $M$. We must show
\begin{equation}
\label{eq:matroidInitialForm}
\qp{M_Q}{\mu}{\nu}  = \inw_w \qp{M}{\mu}{\nu}.
\end{equation}
Observe that for any $i,j\in \mu\setminus \nu$,  
\begin{equation}
\label{eq:uw}
u_{\mu\setminus j}  + u_{\nu\cup j} -  u_{\mu\setminus i}  - u_{\nu\cup i} = w_{\mu\setminus j}  + w_{\nu\cup j} - w_{\mu\setminus i} - w_{\nu\cup i}.
\end{equation}
where $u_{\lambda} = \langle v,e_{\lambda} \rangle + w_{\lambda}$. Now, $p_{\mu \setminus i}p_{\nu \cup i}$ is a summand in $\qp{M_Q}{\mu}{\nu}$ if and only if $u_{\mu \setminus i} = u_{\mu \setminus i_0}$ and $u_{\nu \cup i} = u_{\nu \cup i_0}$. By \Cref{eq:uw}, these equalities hold if and only if $w_{\mu\setminus i}  + w_{\nu\cup i} = w_{\mu\setminus i_0}  + w_{\nu\cup i_0}$. Since $w_{\mu\setminus i_0}  + w_{\nu\cup i_0}$ is the smallest such sum, we have this equality if and only if $p_{\mu \setminus i}p_{\nu \cup i}$ is a summand in $\inw_w\qp{M}{\mu}{\nu}$.	
\end{proof}

\begin{theorem}
	\label{thm:closedEmbedding}
	The morphisms $\psi_{M,M_Q,w}:\inw_w\Gr_M \to \Gr_{M_Q}$ induce a closed immersion $\psi_{M,w}: \inw_w\Gr_M \hookrightarrow \Gr_{M,w}$.
\end{theorem}

\begin{proof}
	Clearly $\varphi_{M_{Q},M_{Q'}}\psi_{M,M_Q,w} = \psi_{M,M_{Q'},w}$, so $\psi_{M,w}$ is defined by the universal property of $\Gr_{M,w}$. It is a closed immersion because the induced morphism 
 $\psi_{M,w}^{\#}: \varinjlim_{\Delta_{M,w}} R_{M'} \to S_{M}^{-1} B_{M}/\inw_wI_M$ is surjective.
\end{proof}

\noindent The following Corollary is an immediate consequence of \Cref{thm:closedEmbedding} and \Cref{prop:closedImmersionIso}. 

\begin{corollary}
	\label{cor:indegMSDIso}
	The closed immersion $\psi_{M,w}:\inw_w\Gr_M \hookrightarrow \Gr_{M,w}$ is an isomorphism when $\Gr_{M,w}$ is integral and of dimension $\dim\Gr_M$.
\end{corollary}

\begin{remark}
	\label{rmk:indegTSCGeometric}
	We now show that our definition of  $\psi_{M,M_Q,w}:\inw_w\Gr_M\to\Gr_{M_Q}$ agrees with the characterization discussed in the introduction.
	For simplicity, suppose $M=U(d,n)$.   We refer the reader to \cite[Chapter~4]{MaclaganSturmfels2015} for  basic facts about circuits of linear subspaces.	
	 As before, let $x$ be a $\kk$-point of $\inw_w\GrU{d}{n}$, $\KK = \kk(\!(t^{\RR})\!)$, and $p$  a $\KK$-point of $\GrU{d}{n}$ so that $\ETrop(p) = x$. The linear subspace $F_p \subset \Spec( \KK[y_0^{\pm},\ldots,y_{n-1}^{\pm}])$ with Pl\"ucker vector $p$ is defined by
	\begin{equation*}
	\ell_{\mu} = \sum_{k=0}^{d}(-1)^k p_{\mu\setminus i_k} \cdot y_{i_k}, \hspace{30pt} \mu = \{i_0, \ldots, i_{d}\} \in {\textstyle  {[n]\choose d+1}  }.
	\end{equation*}
	These form a universal Gr\"obner basis for the ideal they generate in $\KK[y_0,\ldots,y_{n-1}]$. Also, $L_w = \Trop F_p$. Choose a vector $v\in L_w$ in the cell dual to $Q$. Then $\inw_v(F_p)$ is cut out by the linear equations
	\begin{equation}
	\label{eq:initialLinearForms}
	\inw_v\ell_{\mu} = \sum_{k:u_{\mu\setminus i_k} \text{ minimal}} (-1)^k x_{\mu\setminus i_k} \cdot y_{i_k}
	\end{equation}	
	where  $u_{\lambda} =  \langle v,e_{\lambda}\rangle + w_{\lambda}$. Recall that the support of a linear form is $\ell = \sum a_iy_i$  is $\supp (\ell) = \{i\in[n] \, | \, a_i\neq 0    \}$.  The linear space $\inw_v(F_p)$ realizes a matroid $M'$ whose circuits are 
	\begin{equation*}
	\cC(M') = \{\supp (\inw_v\ell_{\mu}) \, | \, \mu \in \textstyle{{[n]\choose d+1}},\; \rho_{M}(\mu) = d \}
	\end{equation*}
	It is easy to see that $\cB(M') = \{\lambda\in{[n]\choose d} \, | \, u_{\lambda}\leq u_{\lambda'} \text{ for all } \lambda'\in \textstyle{{[n]\choose d}} \} $, hence $M_Q=M'$. By \Cref{eq:initialLinearForms} and the description of $\cC(M_Q)$, we see that the Pl\"ucker vector of $\inw_vF_\mu$ is the projection of $x\in \wedge^d\kk^n$ to $\kk^{\cB(M_Q)}$. This  is $\psi_{M,M_Q,w}(x)$, as required. 
\end{remark}

Now we show how to compute the coordinate ring of $\Gr_{M,w}$ in Pl\"ucker and affine coordinates. We will use this in \Cref{prop:3nDimension} below to compute the dimension of $\Gr_{M,w}$ for any $\kk$-realizable $(2,n)$, $(3,6)$, or $(3,7)$ matroid.  Let $\TC{M,w}$ be the collection of top dimensional cells in $\Delta_{M,w}$, and $\Gamma_{M,w}$ the adjacency graph of $\Delta_{M,w}$, as defined in \Cref{sec:TropGrassmannian}. For the uniform matroid, we write $\TC{w} = \TC{U(d,n),w}$ and $\Gamma_{w} = \Gamma_{U(d,n),w}$.
There is an inverse system over $\Gamma_{M,w}$ as in  \Cref{ex:diagramDualGraph}, and $\Gr_{M,w} \cong \varprojlim_{\Gamma_{M,w}} \Gr_{M'}$ by \Cref{pr:FromAllToCodim1Diagram}.
 Let
\begin{equation*}
I_{M,w} = \langle I_{M_Q}B_M \, |\, Q\in \TC{M,w} \rangle  \subset B_M
\end{equation*}
and $R_{M,w} = S_M^{-1}B_M/I_{M,w}$. When the  polytopes in $\TC{M,w}$ share a common vertex, let 
\begin{equation*}
I_{M,w}^x = \langle I_{M_Q}^xB_M^x \, |\, Q\in \TC{M,w} \rangle  \subset B_M^x
\end{equation*}
Given $f\in B_{M_Q}^x$, let $\overline{f} = \pi_{M_Q}(f)$ viewed as an element of $B_M^x$, where
$\pi_{M_Q}:\kk[x_{ij}] \to \kk[x_{ij}] / \langle x_{ij} \ | \ \lambda_{ij} \notin \cB(M_Q) \rangle \cong B_{M_Q}^x$ is the quotient map. 
Let $S_{M,w}^x$ be the multiplicative semigroup of $B_M^x$ generated by $\overline{X}_{\lambda}$ for each $\lambda\in \cB(M_Q)$  and   $Q\in \TC{M,w}$.  Finally, set $R_{M,w}^x = (S_{M,w}^x)^{-1}B_M^x/I_{M,w}^x$.

\begin{proposition}
	\label{prop:limitideal}
	For any $\mat{d}{n}$-matroid $M$ and $w\in \Dr_M$, 
	\begin{equation*}
	\Gr_{M,w} \cong \varprojlim_{\Gamma_{M,w}} \Gr_{M'} \cong T(M)\cap \Proj R_{M,w}
	\end{equation*}
	If the polytopes in $\TC{M,w}$ share a common vertex, then $\Gr_{M,w} \cong \Spec R_{M,w}^x$.
\end{proposition}

\begin{proof}
	The first isomorphism is established in \Cref{pr:FromAllToCodim1Diagram}.
	For each $Q\in \Delta_{M,w}$ of codimension 0 or 1, we have ring maps  $R_{M_Q} \rightarrow R_{M,w}$ and $R_{M_Q}^x \rightarrow R_{M,w}^x$  induced by $B_{M_Q}\subset B_{M}$ and $B_{M_Q}^x\subset B_{M}^x$ respectively. These produce  morphisms 
	\begin{equation*} 
	\Psi: \varinjlim_{\Gamma_{M,w}} R_{M_Q} \longrightarrow R_{M,w} \hspace{15pt} \text{ and } \hspace{15pt} \Psi^x: \varinjlim_{\Gamma_{M,w}} R_{M_Q}^x \longrightarrow R_{M,w}^x 
	\end{equation*}  
	Now let us construct inverses $\Theta$ and $\Theta^x$.  For $\lambda \in \cB(M)$ define $\Theta(p_{\lambda}) = \varphi_{M_Q}^{\#}(p_{\lambda})$ where  $Q\in \TC{M,w}$  and $\lambda \in \cB(M_Q)$. If $Q'$ is another such polytope, we must show that $\varphi_{M_Q}^{\#}(p_{\lambda}) = \varphi_{M_{Q'}}^{\#}(p_{\lambda})$. 
	When $Q'' = Q\cap Q'$ has codimension 1,   
	\begin{equation*}
	\varphi_{M_Q}^{\#}(p_{\lambda}) = \varphi_{M_{Q''}}^{\#}(p_{\lambda}) = \varphi_{M_{Q'}}^{\#}(p_{\lambda}).
	\end{equation*} 
	The general case follows from this observation and \Cref{lm:graphsAreConnected}.  Similarly, for $\lambda_{ij}\in\cB(M)$  define $\Theta^x(x_{ij}) = \varphi_{M_Q}^{\#}(x_{ij})$ where $Q\in \TC{M,w}$  and $\lambda_{ij} \in \cB(M_Q)$. It is easy to see that $\Theta$ and $\Theta^x$ take elements in $S_M$ and  $S_{M,w}^x$,  respectively, to invertible elements.

	Finally, we claim that $I_{M,w} \subset \ker(\Theta)$. It suffices to show that $\Theta(af) = 0$ for $a\in S_{M_Q}^{-1}B_{M_Q}$ and $f\in I_{M_Q}$ where $Q\in \TC{M,w}$. But $\Theta(af) = \Theta(a)\varphi_{M_Q}^{\#}(f) = 0$. This shows that $\Theta$ is defined on $R_{M,w}$. A similar argument shows that $I_{M,w}^x \subset \ker(\Theta^x)$. Therefore $\Theta$ and $\Theta^x$ are defined on $R_{M,w}$ and $R_{M,w}^x$ respectively. One may verify that they are inverses to $\Psi$ and $\Psi^x$ respectively.
\end{proof}

\begin{lemma}
	\label{prop:limitRank2}
	If $M$ is a rank $2$ matroid and $w\in \TGr_M$, then $\psi_{M,w}: \inw_w\Gr_M \to \Gr_{M,w}$ is an isomorphism.
\end{lemma}

\begin{proof}
By \Cref{thm:closedEmbedding} and  \Cref{prop:limitideal}, the identity on $B_M$ induces a surjective map $R_{M,w} \to S_{M}^{-1}B_{M}/\inw_wI_{M}$,
	so $I_{M,w} \subset \inw_wI_{M}$. The set 
	\begin{equation*}
	\cT = \left\{\qp{M}{\mu}{\nu} \, \left| \, |\mu| = 3, |\nu|=1, \mu\cap\nu = \emptyset \right.  \right\}
	\end{equation*}
	is a universal Gr\"obner basis for $I_M$ (when $M=U(2,n)$, this is the set of three-term Pl\"ucker relations). 
	Let $\qp{M}{\mu}{\nu}\in \cT$. If $Q\in \Delta_{M,w}$ such that $\qp{M_Q}{\mu}{\nu}\neq 0$, then $\qp{M_Q}{\mu}{\nu} = \inw_w\qp{M}{\mu}{\nu}$  by \Cref{eq:matroidInitialForm}, hence $I_{M,w} = \inw_wI_M$.
\end{proof}

\begin{proposition}
	\label{prop:3nDimension}
	Let $M$ be a $\kk$-realizable $(2,n)$, $(3,6)$, or $(3,7)$ matroid and $w\in \TGr_M$. Then $\dim \Gr_{M,w} = \dim \Gr_{M}$. 
\end{proposition}

\begin{proof}
	The rank $2$ case follows from \Cref{prop:limitRank2}, so let $M$ be a $\mat{3}{6}$ or $\mat{3}{7}$ matroid.  By \Cref{lm:simpleIsEnoughInitialDeg} we may assume that $M$ is simple.
	Once $\Delta_{M,w}$ is computed, this calculation may be done by hand, see \Cref{ex:limitTree} and \Cref{ex:affineCoordinates}. Due to the large number of cases, we use a computer. 
	The Gr\"obner fan structure on $\TGr_M$ is computed using~\gfan, and it catalogs all cones up to $\Aut(M)$-symmetry. The uniform cases were completed in \cite[Theorem~5.4]{SpeyerSturmfels2004a}  for $(3,6)$, and in \cite[Theorem~2.1]{HJJS09} for $(3,7)$. For each cone, we choose a representative weight vector $w$ and use~\pmk~to compute $\Delta_{M,w}$. 
	Let  $g$ be the product of all $p_{\lambda}$ for $\lambda\in \cB(M)$. Then $(I_{M,w}:g^{\infty})\subset B_M$ is the homogeneous ideal of the closure of $\Gr_{M,w}$ in $\Proj(B_M)$. We use~\MT~ to show that its dimension equals  $\dim \Gr_M$. The saturation was performed one variable at a time using the \texttt{saturate} function with the \texttt{Bayer} strategy. There are a total of 67 ideals to check among the simple $\mat{3}{6}$-matroids,  and 2815 ideals in the $\mat{3}{7}$ case (not counting $w=0$). The total process takes a couple minutes for $(3,6)$ and several hours for $(3,7)$.   
\end{proof}

\section{Geometry of thin Schubert cells}
\label{sec:smoothnessOfTSC}

By Mn\"ev universality, there exist $\mat{3}{n}$ matroids whose thin Schubert cells are singular or reducible, for sufficiently large $n$. Nevertheless,  $\Gr_M$ is smooth and irreducible  when $M$ is a rank 2 matroid, or a rank $3$ matroid on $[6]$ or $[7]$, as we demonstrate in this section.  Let $M$ be a $\kk$-realizable matroid. For each rank 1 flat $\eta$ of $M$, choose a non-loop $s_{\eta}\in \eta$, and set $S = \{s_{\eta} \,|\, \eta \text{ is a rank 1 flat} \}$. Then $M|S$ is a simple matroid, and $\Gr_M$ is the product of $\Gr_{M|S}$ with an algebraic torus as discussed in \Cref{lem:parallelTSC}. Because the only simple $\mat{2}{n}$-matroid is $U(2,n)$,  this leads to a straightforward proof that $\Gr_{M}$ is smooth and irreducible in the rank 2 case. Therefore, we will focus on rank $3$ matroids.  

Let $M$ be a $\kk$-realizable loop-free $\mat{3}{n}$-matroid for $n\geq 3$.  We can represent $M$ as a configuration of $n$ points $p_0, \ldots, p_{n-1}$ in $\PP^2$. The elements $i,j$ are parallel in $M$ if and only if  $p_i, p_j$ coincide. A subset $\beta \subset [n]$ is a basis if and only if $|\beta|=3$ and $p_i$ are not collinear for $i\in \beta$, and $\eta\subset [n]$ is a rank $2$ flat if and only if there is a line $L\subset \PP^{2}$ such that $p_{i} \in L$ precisely when $i \in \eta$. When drawing these pictures, we will only draw the points (labeled $0, \ldots, n-1$) and lines through at least 3 rank 1 flats, see Figures \ref{fig:affineCoordinatesEx} and \ref{fig:higherGrassmannians}. With this in mind, we say that $\eta$ is a \textit{line} of $M$ if $\eta$ is a rank $2$ flat and $|\eta\cap S|\geq 3$. The set of lines of $M$, denoted by $\cL(M)$, completely determines $M|S$.

All $\mat{3}{n}$-matroids for $3\leq n \leq 7$ (up to $S_n$-symmetry) can be found in the online \textit{Database of Matroids}

\begin{center} \url{http://www-imai.is.s.u-tokyo.ac.jp/~ymatsu/matroid/} \end{center}

\noindent In showing that $\Gr_M$ is smooth and irreducible for these matroids, we start with $n=3$ and work inductively.  At each step, we need only consider simple and connected matroids by Lemmas \ref{lem:reduceToConnected} and \ref{lem:parallelTSC}. However, there are still 8, resp. 21, simple and connected $\kk$-realizable $\mat{3}{6}$, resp. $\mat{3}{7}$-matroids. We use \Cref{lem:smoothScheme} to handle the remaining cases. 

In this section and the next, we will need the following definitions. A morphism of schemes is said to have \textit{connected fibers} if all of its nonempty fibers are connected. We say that $f:X\to Y$ is a \textit{SDC-morphism} if it is smooth and  dominant with connected fibers.

\begin{lemma}
	\label{lem:smoothScheme} 
	Suppose $M$ is a loop-free $\kk$-realizable $\mat{3}{n}$-matroid, $i\in [n]$  contained in exactly $k$ lines where $0\leq k \leq 2$, and  $\Gr_{M|[n]\setminus i}$ is integral.  The composition of a dominant open immersion $\Gr_M \hookrightarrow \Gr_{M|[n]\setminus i} \times \GGm^{3-k}$, followed by the projection away from $\GGm^{3-k}$ produces a SDC-morphism $\Gr_M \to\Gr_{M|[n]\setminus i}$.
\end{lemma}

\begin{proof}
	We use affine coordinates as in \Cref{construction}. Assume that $\{0,1,2\}$ is a basis of $M$, $i=n-1$, and the first 3 columns of $X$ form the identity matrix.   Suppose $n-1$ is not contained in any line.  This means that $\{i,j,n-1\} \in \cB(M)$ for $0\leq i<j\leq n-2$, so  $I_M^x$ is generated by $X_{\lambda}$ for suitable $\lambda\in{[n-1]\choose 3}$. Therefore  $R_M^x$ is obtained from $R_{M|[n-1]}^x[x_{0,n-4}^{\pm},x_{1,n-4}^{\pm},x_{2,n-4}^{\pm}]$ by inverting  $X_{\beta}$ for $\beta\in \cB(M)$. These ring elements are nonzero divisors since they are not 0 (by $\kk$-realizability) and $R_{M|[n-1]}$ is an integral domain. This localization produces the open immersion $\Gr_{M} \hookrightarrow \Gr_{M|[n-1]} \times \GGm^3$. 

Suppose $n-1$ is contained in exactly one line $\eta$. By applying a suitable permutation, assume $0,1\in\eta$.  Since $\lambda_{2,j-3}\notin \cB(M)$ when $j\in \eta$, $I_M^x$ is generated by $X_{\lambda}$ for suitable $\lambda\in{[n-1]\choose 3}$, and $R_M^x$ is obtained from $R_{M|[n-1]}^x[x_{0,n-4}^{\pm},x_{1,n-4}^{\pm}]$ by inverting the nonzero divisors  $X_{\beta}$ for $\beta\in \cB(M)$, producing the open immersion $\Gr_{M} \hookrightarrow \Gr_{M|[n-1]} \times \GGm^2$.

Now assume $n$ is contained in exactly two distinct lines $\eta_1$ and $\eta_2$. We may assume   $0,1\in \eta_1$ and $2\in \eta_2$. Because  $\lambda_{0,j-3}, \lambda_{1,j-3}\in\cB(M)$ when $j\in \eta_2 \setminus \{n-1\}$, the corresponding $x_{0,j-3}, x_{1,j-3}$ are invertible in $R_{M}^x$. Similar to the previous case, 
\begin{equation*}
R_{M}^x  =  (S_{M}^x)^{-1} R_{M|[n-1]}^x[x_{0,n-4}^{\pm}, x_{1,n-4}^{\pm}]/\langle x_{0,j-3}x_{1,n-4} - x_{1,j-3}x_{0,n-4} \, | \, j\in \eta_2\setminus \{2\} \rangle.
\end{equation*}
Since $|\eta_2| \geq 3$, this ring is isomorphic to $(S_{M}^x)^{-1}R_{M|[n-1]}^x[x_{1,n-1}^{\pm}]$, and we have an open immersion $\Gr_{M} \hookrightarrow \Gr_{M|[n-1]} \times \GGm$. Finally, $\Gr_{M} \to \Gr_{M|[n-1]} $ is a SDC-morphism by  \Cref{prop:SDCcomposition} and the fact that the projection away from $\GGm^{3-k}$ is SDC.  
\end{proof}

\begin{proposition}
	\label{prop:37SmoothIrreducible}
	For $3\leq n\leq 7$, $\Gr_{M}$ is smooth and irreducible for any $\kk$-realizable $\mat{3}{n}$-matroid $M$. 
\end{proposition}

\begin{proof}	
	The only $\mat{3}{3}$-matroid is $U(3,3)$, and its  thin Schubert cell consists of a single point. Next, of the four $\mat{3}{4}$-matroids up to $S_4$-symmetry,  $U(3,4)$ is the only one that is simple and connected. Since $\GrU{3}{4} \cong \GGm^3$, it is smooth an irreducible.  That the thin Schubert cells of the remaining three are also smooth and irreducible follows from  Lemmas \ref{lem:reduceToConnected}, \ref{lem:parallelTSC}, \Cref{prop:rk2smooth}, and the $\mat{3}{3}$-case.
	If $M$ is a $\mat{3}{5}$-matroid, then $M^*$ is a $\mat{2}{5}$-matroid, so  $\Gr_{M}$ is smooth and irreducible by  \Cref{prop:rk2smooth} and the isomorphism $\Gr_{M} \cong \Gr_{M^*}$. 
	
	Next, consider the $\mat{3}{6}$ case.  As before, we need only examine the simple and connected matroids. For every such matroid $M$, each $i\in [6]$ is contained in $2$ or fewer lines of $M$. Therefore, $\Gr_M$ is smooth and irreducible by  \Cref{lem:smoothScheme}, \Cref{prop:SDCOpen}(2), and the previous cases.  	Finally, if $M$ is any simple and connected $\mat{3}{7}$-matroid other than the Fano matroid, then $M$ has an $i\in [n]$ contained in no more than 2 lines. Similar to the $\mat{3}{6}$ case, $\Gr_M$ is smooth and irreducible. 
\end{proof}

\section{Morphisms between thin Schubert cells}
\label{sec:smoothnessMorphisms}

\noindent In this section, we will consider the  morphisms $\varphi_{M,M'}:\Gr_{M} \rightarrow \Gr_{M'}$.   
For arbitrary matroids $M$, SDC-properties of the morphisms $\varphi_{M,M'}$ are entirely determined by  $\varphi_{M|S,M'|S}$ with $S$ as in the beginning of \Cref{sec:smoothnessOfTSC}, hence we need only consider simple $M$. These reductions are contained in \Cref{appendix:MAC},  and yield a straightforward proof that $\varphi_{M,M'}$ is a SDC-morphism when $M$ is a $\mat{2}{n}$- matroid. 
Therefore, we focus on the rank $3$ case. For the proof of  \Cref{theoremA}, we will only need to verify that $\varphi_{M,M'}$ is a SDC-morphism for pairs $M'\lessdot M$ of $\mat{3}{7}$-matroids where $Q_{M'}$ is not a face of the hypersimplex. To do this, we will find it convenient to show that $\varphi_{M,M'}$ is a SDC-morphism for all pairs of $(3,m)$-matroids $M'\leq M$  where $m\leq 6$. Recall from \Cref{prop:facesMatroidPolytope} that the facets of $Q_M$ correspond to the nondegenerate subsets of $[n]$, when $M$ is connected. We begin with  a test for nondegeneracy in the rank 3 setting. 

\begin{proposition}
	\label{lm:DeltaMFacetsRk3}
	Let $M$ be a simple and connected $\kk$-realizable $\mat{3}{n}$-matroid, and $\eta \subset [n]$. Then $\eta$ is nondegenerate if and only if either
	\begin{enumerate}[noitemsep]
		\item $|\eta| = 1$ and $M/\eta$ is connected, or 
		\item $|\eta| = n-1$ and $M|\eta$ is connected, or 
		\item $\eta$ is a line.
	\end{enumerate}
\end{proposition}

\begin{proof}
	First, we claim that if $\eta$ is nondegenerate, then $|\eta| = 1, n-1$ or $\eta$ is a line. To that end, fix a subset $\eta$ such that  $1<|\eta|<n-1$, and $\eta$ is not a line ($\eta$ is clearly degenerate when $|\eta| = 0$ or $n$).  If $|\eta| = 2$, then $M|\eta \cong U(1,1)\oplus U(1,1)$, hence not connected. Otherwise, $2<|\eta|<n-1$ and $\rho_M(\eta)  =2$ or $3$. If $\rho_M(\eta) = 2$, then there is a line $\eta'$ properly containing $\eta$. In this case, every element in $\eta'\setminus \eta$ becomes a loop in $M/\eta$. Because $M/\eta$ has at least 2 elements, having a loop implies that it is not connected.  If $\rho_M(\eta) = 3$, then every element in $[n]\setminus \eta$ is a loop in $M/\eta$. 
	Since $M$ is connected, $n-|\eta|\geq 2$, thus $M/\eta$ is not connected. In all cases, $\eta$ is degenerate, hence the claim.
	
	If $|\eta| = 1$, then $M|\eta \cong U(1,1)$ which is connected,  so $\eta$ is nondegenerate if and only if $M/\eta$ is connected. Similarly, if $|\eta| = n-1$, then $M/\eta \cong U(1,1)$ which is connected, so $\eta$ is nondegenerate if and only if $M|\eta$ is connected.     Finally, suppose $\eta$ is a line. Then $M|\eta \cong U(2,k)$ ($k\geq 3$) and $M/\eta \cong U(1,\ell)$ ($\ell \geq 2$), both of which are connected, so $\eta$ is nondegenerate.   
\end{proof}

\Cref{lem:smoothScheme} and the next two lemmas will allow us to trim down the amount of $M'\lessdot M$  that we will need to check in the proofs of \Cref{prop:smoothmap} and \Cref{prop:37smoothmaps}.

\begin{lemma}
	\label{lem:smooth} 
	Suppose $M$ is simple and connected and $\eta$ is a line of $M$. If $i\in [n]$ is not contained in any line and $\Gr_{M|[n]\setminus i}$ is integral, then $M_{\eta}|([n]\setminus i) \leq M|[n]\setminus i$, and we have a commutative diagram
	\begin{equation*} 
	\begin{tikzcd}
	\arrow[d, "\varphi_{M,M_{\eta}}"']  \Gr_M \arrow[hookrightarrow]{r}{}    & \Gr_{M|[n]\setminus i} \times \GGm^3 \arrow[d, "\varphi_{M|[n]\setminus i,M_{\eta}|[n]\setminus i} \times \pi"] \\
	  \Gr_{M_{\eta}} \arrow[hookrightarrow]{r}{} & \Gr_{M_{\eta}|[n]\setminus i} \times \GGm
	\end{tikzcd}  
	\end{equation*} 
	where the top and bottom arrows are dominant open immersions, and $\pi$ is a coordinate projection. In particular, if $\varphi_{M|[n]\setminus i,M_{\eta}|[n]\setminus i}$ is a SDC-morphism, then so is $\varphi_{M,M_{\eta}}$. 
\end{lemma}

\begin{proof}
	As usual, we use affine coordinates as in \Cref{construction}, assume that $\{0,1,2\}$ is a basis and the first 3 columns of $X$ form the identity matrix. We may also assume that $i=n-1$ and  $0,1\in \eta$.	
	 As in the proof of \Cref{lem:smoothScheme}, the dominant open immersion $\Gr_{M} \hookrightarrow \Gr_{M|[n-1]} \times \GGm^3$ is induced by the inversion of $X_{\beta}$, $\beta\in \cB(M)$ in  $R_{M|[n-1]}^x[x_{0,n-4}^{\pm},x_{1,n-4}^{\pm},x_{2,n-4}^{\pm}]$. Since $M_{\eta} \cong M/\eta \oplus  M|\eta$ and $M/\eta$ has rank 1, all elements of $[n]\setminus \eta$ become parallel to $2$ in $M_{\eta}$, in particular $\lambda_{0,j-3} = \lambda_{1,j-3} = 0$ in $R_{M_{\eta}}^x$ for $j\notin \eta$. Similar to $R_{M}^x$,  $R_{M_{\eta}}^x$ is obtained from $R_{M_{\eta}|[n-1]}^x[x_{2,n-4}^{\pm}]$ by inverting $X_{\beta}$ for $\beta \in \cB(M_{\eta})$. This localization induces a dominant open immersion $\Gr_{M_{\eta}} \hookrightarrow \Gr_{M_{\eta}|[n-1]} \times \GGm$. The morphism  $\GGm^{3} \to \GGm$ is induced by $\kk[x_{2,n-4}^{\pm}] \subset \kk[x_{0,n-4}^{\pm},x_{1,n-4}^{\pm},x_{2,n-4}^{\pm}]$.  Commutativity of the diagram is now a simple verification at the level of rings. The last statement follows from \Cref{prop:SDCcomposition} and the fact that $\pi$ is a SDC-morphism.
\end{proof}

\begin{lemma}
	\label{lem:smooth2}
Suppose $M$ is simple and connected, $\eta$ is a line of $M$, and $\Gr_{M|[n]\setminus i}$ is integral. If $i\in \eta$ is not contained in any other line, then $M_{\eta\setminus i} \leq M|[n]\setminus i$, and we have a commutative diagram
	\begin{equation*} 
	\begin{tikzcd}
	\arrow[d, "\varphi_{M,M_{\eta}}"']  \Gr_M \arrow[hookrightarrow]{r}{}    & \Gr_{M|[n]\setminus i} \times \GGm^2 \arrow[d, "\varphi_{M|[n]\setminus i,M_{\eta\setminus i}} \times \id"] \\
	\Gr_{M_{\eta}} \arrow[hookrightarrow]{r}{} & \Gr_{M_{\eta\setminus i}} \times \GGm^2
	\end{tikzcd}  
	\end{equation*} 
	where the top and bottom arrows are dominant  open immersions.  In particular, if $\varphi_{M|[n]\setminus i,M_{\eta\setminus i}}$ is a SDC-morphism, then so is $\varphi_{M,M_{\eta}}$.  
\end{lemma}

\begin{proof} 
	Similar to the proof of \Cref{lem:smooth}, we use affine coordinates as in \Cref{construction}, assume that $\{0,1,2\}$ is a basis, the first 3 columns of $X$ form the identity matrix, $i=n-1$, and  $0,1,n-1\in \eta$.
	As in the proof of \Cref{lem:smoothScheme}, the dominant open immersion $\Gr_{M} \hookrightarrow \Gr_{M|[n-1]} \times \GGm^2$ is induced by the inversion of $X_{\beta}$, $\beta\in \cB(M)$ in  $R_{M|[n-1]}^x[x_{0,n-4}^{\pm},x_{1,n-4}^{\pm}]$. 
	Since $M_{\eta} \cong M/\eta \oplus M|\eta$ and $M/\eta$ has rank 1, all elements of $[n]\setminus \eta$ become parallel to $2$ in $M_{\eta}$. Because $\{0,2,n-1\}$ and $\{1,2,n-1\}$ remains bases in $M_{\eta}$, $x_{0,n-4}$ and $x_{1,n-4}$ are still invertible in $R_{M_{\eta}}^x$. Similar to $R_{M}^x$,  $R_{M_{\eta}}^x$ is obtained from $R_{M_{\eta\setminus \{n-1\}}}^x[x_{0,n-4}^{\pm},x_{1,n-4}^{\pm}]$ by inverting $X_{\beta}$ for $\beta \in \cB(M_{\eta})$. This localization induces a dominant open immersion $\Gr_{M_{\eta}} \hookrightarrow \Gr_{M_{\eta\setminus \{n-1\}}} \times \GGm^2$. Commutativity of the diagram is now a simple verification at the level of rings. The last statement follows from \Cref{prop:SDCcomposition}.
\end{proof}

\begin{proposition}
	\label{prop:smoothmap}
	Let $M$ be a $\mat{3}{n}$ matroid for $3\leq n \leq 6$ and $M'\leq M$. Then $\varphi_{M,M'}: \Gr_{M} \rightarrow \Gr_{M'}$ is a SDC-morphism. 
\end{proposition}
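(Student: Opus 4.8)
The plan is to induct on $n$. For $n\le 5$ the dual matroid $M^{*}$ has rank $n-3\le 2$, and under the isomorphism $\Gr^{3,n}\cong\Gr^{n-3,n}$ the cell $\Gr_{M}$ is carried isomorphically onto $\Gr_{M^{*}}$. Since $\Delta_{M^{*}}$ is the image of $\Delta_{M}$ under the affine involution $x\mapsto(1,\dots,1)-x$, faces correspond to faces and this identification is compatible with the face maps of Proposition \ref{prop:facemaps}, so $\varphi_{M,M'}$ is identified with $\varphi_{M^{*},(M')^{*}}$. The latter is a face map between thin Schubert cells of matroids of rank $\le 2$, hence smooth by Proposition \ref{prop:rk2smoothmap} (and trivially in the rank $0$ and $1$ cases, where the cells are points or tori and the maps are coordinate projections). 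This settles the base of the induction.

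For $n=6$ I would first reduce to facet maps. By Proposition \ref{prop:facemaps}, $\varphi_{M,M''}=\varphi_{M',M''}\circ\varphi_{M,M'}$ whenever $M''\le M'\le M$, and every proper face of a polytope is obtained from it by a chain of facet inclusions; since smooth morphisms are closed under composition, it is enough to show $\varphi_{M,M'}$ is smooth for every pair $M'\lessdot M$ of $(3,[6])$-matroids. If $M$ is not simple or not connected, Lemma \ref{lem:smooth} rewrites $\varphi_{M,M'}$, up to products with tori and open immersions, as a face map of $(3,[m])$-matroids with $m<6$, which is smooth by the induction hypothesis. So assume $M$ is simple and connected. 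By \cite[Proposition~4.2.2]{Alexeev2015} the facets of $\Delta_{M}$ are then exactly the loci $x_{i}=0$, i.e.\ $\Delta_{M_{[6]\setminus i}}$, and the loci $x_{\eta}=2$ for the lines $\eta$ of $M$; and Lemma \ref{lem:smooth}(i)--(ii) handles $\varphi_{M,M_{\eta}}$ whenever some element of $M$ lies on no line, or some element of $\eta$ lies on no second line. Consulting the census of $36$ rank-$3$ matroids on $[6]$, what remains is a short list, finite modulo $S_{6}$: the deletion facets $\varphi_{M,M_{[6]\setminus i}}$ for the simple connected $M$, and the line facets of the matroid of Figure \ref{fig:36Matroid} with $\cL(M)=\{\{1,2,4\},\{1,3,5\},\{2,3,6\},\{4,5,6\}\}$, in which every element lies on exactly two lines so that Lemma \ref{lem:smooth} does not apply.

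For each remaining pair I would present $\varphi_{M,M'}$ in affine coordinates as a morphism $\Spec(R_{M}^{A})\to\Spec(R_{M'}^{A})$, write $R_{M}^{A}=R_{M'}^{A}[t_{1},\dots,t_{c}]/\langle g_{1},\dots,g_{d}\rangle$, and check with \texttt{Macaulay2} that the Jacobian ideal $\cJ(\varphi_{M,M'})$ of maximal minors of $(\partial g_{k}/\partial t_{\ell})$ is the unit ideal of $R_{M}^{A}$; by \cite[Tag 01V4]{stacks-project} this gives smoothness. Each such check is a short minor computation in the spirit of the proof of Proposition \ref{prop:37smooth} --- typically a single monomial among the generators of $\cJ(\varphi_{M,M'})$ is already a unit. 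The hard part here is organizational rather than conceptual: correctly enumerating, up to $S_{6}$-symmetry, the facets of each $\Delta_{M}$ that escape Lemma \ref{lem:smooth}, identifying the matroids $M_{\eta}$ and $M_{[6]\setminus i}$ together with their affine presentations, and verifying that no pair has been overlooked.
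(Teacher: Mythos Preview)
Your duality reduction for $n\le 5$ is a nice shortcut that the paper does not take: the paper instead lists the simple connected $(3,[5])$-matroids and their facets explicitly and checks each by hand. Your route is cleaner, though you should note that the compatibility of the isomorphism $\Gr_M\cong\Gr_{M^*}$ with the face maps of Proposition~\ref{prop:facemaps} is being used and is not stated anywhere in the paper; it is true (since on Pl\"ucker coordinates duality is $p_\lambda\mapsto \pm p_{[n]\setminus\lambda}$ and the face map is just restriction to a subset of coordinates), but it deserves a sentence.

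For $n=6$ there is a genuine gap in your facet enumeration. You assert, citing \cite[Proposition~4.2.2]{Alexeev2015}, that the facets of $\Delta_M$ for simple connected $M$ are exactly the loci $x_i=0$ and $x_\eta=2$ for lines $\eta$. This is not what that proposition says: the minimal inequalities are $x_i\ge 0$ together with $x_\eta\le\rho_M(\eta)$ for all \emph{nondegenerate flats} $\eta$, and the rank-$1$ nondegenerate flats $\{i\}$ contribute the facets $x_i=1$. These are present for every simple connected $(3,[6])$-matroid in the paper's table (e.g.\ cases 6.1(3)--(4), 6.2(2), 6.5(2), 6.8(2)), and your argument never touches them. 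Lemma~\ref{lem:smooth} says nothing about the facets $x_i=1$ when $M$ is simple and connected, so these maps $\varphi_{M,M_{\{i\}}}$ survive the reduction and must be checked directly; the paper does this case by case along with the $x_i=0$ facets. Your Jacobian strategy would of course dispatch them as well, but as written your list of ``remaining cases'' is incomplete and the proof does not close.
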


\begin{proof} 
By Lemmas \ref{lem:reduceToConnected} and \ref{lem:parallelTSC}, it suffices to consider pairs of matroids of the form $M'\lessdot M$ where $M$ is simple and connected. The only $\mat{3}{3}$-matroid is $U(3,3)$, and $\Delta(3,3)$ is a point, so there is nothing to check.  For $n=4$, the only simple and connected matroid is $U(3,4)$ so $\varphi_{M,M'}$ is a SDC-morphism by \Cref{pr:uniform}. The case $n=5$ follows from \Cref{prop:rk2smoothmap} and \Cref{lm:DualsRespectFaces}.

Finally consider $n=6$. We may assume that $M$ and $M^*$ are simple by \Cref{lm:DualsRespectFaces}, and  $M'=M_{\eta}$ where $|\eta|=1,n-1$ or $\eta$ is a line by \Cref{lm:DeltaMFacetsRk3}. 
It suffices to consider pairs $M_{\eta}\lessdot M$ such that one of the following holds:
\begin{enumerate}[noitemsep]
	\item $\eta = [n]\setminus \{i\}$ and $i$ is contained in 3 or more lines (\Cref{lem:smoothScheme}),
	\item $\eta = \{i\}$ and $i$ is contained in 3 or more lines of $M^*$ (\Cref{lm:DualsRespectFaces}), or
	\item $\eta\in \cL(M)$, every $i\in [n]$ is contained in a line, (\Cref{lem:smooth}) and every $j\in \eta$ is contained in another line (\Cref{lem:smooth2}).
\end{enumerate}
 For $\mat{3}{6}$ matroids (1) and (2) can never happen. Up to symmetry, there is only one pair that satisfies (3):
\begin{equation*}
\cL(M) =\{ \{0,1,3\}, \{0,2,4\}, \{1,2,5\}, \{3,4,5\}\}
\end{equation*}
and $\eta = \{0,1,3\}$.   By the isomorphism $M_{\eta} \cong M/\eta \oplus M|\eta$,  $\{0,1,3\}$ is the only line of $M_{\eta}$ and $2,4,5$ are parallel to each other.   We use affine coordinates as in \Cref{construction}. Assume that the first 3 columns of $X$ form the identity matrix, so $R_{M'}^{x} = \kk[x_{00}^{\pm},x_{10}^{\pm},x_{11}^{\pm},x_{22}^{\pm}]$ and 
\begin{equation*} R_{M}^x = R_{M'}^x[ x_{01}^{\pm}, x_{12}^{\pm}, x_{21}^{\pm}] /\left\langle  x_{00}x_{12}x_{21}+x_{01}x_{10}x_{22} \right\rangle  \cong R_{M'}^x[x_{12}^{\pm},x_{21}^{\pm}].
\end{equation*} 
Then $R_{M_{\eta}}^x \rightarrow R_M^x$ may be identified with the inclusion $R_{M_{\eta}}^x \subset R_{M_{\eta}}^x[x_{12}^{\pm},x_{21}^{\pm}]$ and therefore $\varphi_{M,M_{\eta}}$ is a SDC-morphism. 
\end{proof}

\begin{proposition}
	\label{prop:37smoothmaps}
	Let $M$ be a $\mat{3}{7}$-matroid and $M'\leq M$ such that $Q_{M'}$ is not a face of $\Delta(3,7)$. Then $\varphi_{M,M'}: \Gr_{M} \rightarrow \Gr_{M'}$ is a SDC-morphism. 
\end{proposition}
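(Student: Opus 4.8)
The plan is to reduce, using the results already established in this section, to a short explicit list of pairs $(M,M')$ and to dispatch those with the Jacobian criterion implemented in \texttt{Macaulay2}.

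First, Proposition \ref{prop:smoothmap} shows that $\varphi_{M,M'}$ is smooth for every pair $M'\le M$ of $(3,[m])$-matroids with $m\le 6$, which is exactly the hypothesis needed to invoke Lemma \ref{lem:smooth} with $n=7$. Lemma \ref{lem:smooth} then disposes of all pairs for which $M$ is not simple or not connected, so I would assume from now on that $M$ is simple and connected. Since $M'\lessdot M$, we have $M'=M_\eta$ for a nondegenerate flat $\eta$ by the facet description quoted before Lemma \ref{lem:smooth}, and the bases of $M_\eta$ are those $\beta$ with $|\beta\cap\eta|=\rho_M(\eta)$, so $M_\eta\cong (M|\eta)\oplus(M/\eta)$ and hence $\Gr_{M_\eta}\cong \Gr_{M|\eta}\times\Gr_{M/\eta}$.

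Next I would split on the type of $\eta$. If $\eta$ is a rank-$2$ flat of size $\ge 3$, i.e.\ a line of $M$, then Lemma \ref{lem:smooth}(i),(ii) already yields smoothness of $\varphi_{M,M_\eta}$ whenever some element of $[7]$ lies on no line of $M$, or some element of $\eta$ lies on no line other than $\eta$; using the census of simple connected $(3,[7])$-matroids in the Database of Matroids together with Proposition \ref{prop:37smooth}, and the $S_7$-action, the pairs not covered form a finite list (in particular the matroids $M_1,M_2$ of Proposition \ref{prop:37smooth} appear here, with $\eta$ ranging over $S_7$-orbit representatives of their lines). If instead $\eta$ is a singleton $\{i\}$ or a rank-$2$ flat of size $2$, then $M/\eta$ has rank $\le 1$ and $M|\eta$ has rank $\le 2$; one then checks that either $\Delta_{M_\eta}$ is a face of $\Delta^{3,7}$ (excluded by hypothesis -- this happens exactly when $M/\eta$ is uniform), or else $\varphi_{M,M_\eta}$ is again covered by a finite case analysis, reducing as in the proofs of Propositions \ref{prop:rk2smoothmap} and \ref{prop:smoothmap} or by the Jacobian criterion below. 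Either way only finitely many $(M,\eta)$ remain.

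Finally, for each pair in the residual list I would write $R_M^A$, in the affine coordinates of this section, as a quotient $R_{M_\eta}^A[X_{ij}:\lambda_{ij}\in\cB(M)\setminus\cB(M_\eta)]/\langle g_1,\dots,g_r\rangle$ and compute the Jacobian ideal $\cJ(\varphi_{M,M_\eta})\subset R_M^A$ with \texttt{Macaulay2}, verifying $\cJ(\varphi_{M,M_\eta})=\langle 1\rangle$. In the small cases this can be seen by hand exactly as in the proof of Proposition \ref{prop:37smooth}: the relevant generator is a sum of squarefree monomials in the minors $A_\lambda$, so some partial derivative $\partial g_k/\partial X_{ij}$ is a monomial in the $A_\lambda$ with $\lambda\in\cB(M)$, hence a unit in $R_M^A$. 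The main obstacle is the bookkeeping in the two reduction steps -- pinning down, up to $S_7$-symmetry, precisely which nondegenerate flats $\eta$ give facets that are \emph{not} faces of $\Delta^{3,7}$, and which of the resulting pairs escape Lemma \ref{lem:smooth} -- so as to guarantee that the residual list is genuinely finite and small enough to be finished off by the Jacobian computation.
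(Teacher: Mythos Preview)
Your plan coincides with the paper's approach: use Proposition~\ref{prop:smoothmap} to feed the inductive hypothesis into Lemma~\ref{lem:smooth} at $n=7$, reduce to $M$ simple and connected, and dispatch the residual line-facet pairs with the Jacobian criterion. The paper carries out explicitly the bookkeeping you identify as the main obstacle: it lists six simple connected $(3,[7])$-matroids (labelled 7.1--7.6) admitting a line $\eta$ all of whose elements lie on a second line, tabulates $S_7$-orbit representatives of these facets $x_\eta=2$, and for each either invokes Lemma~\ref{lem:smooth} once more or exhibits a unit in the relative Jacobian ideal by hand (for instance $A_{167}=X_{23}X_{34}-X_{33}X_{24}\in\cJ(\varphi_{M,M'})$ in case~7.1(1)), rather than delegating to \texttt{Macaulay2}.

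Your treatment of non-line facets goes further than the paper's but is itself incomplete: your case split covers singleton flats and size-two rank-two flats but omits the facets $x_i=0$, which come from the separate inequalities $x_i\ge 0$ in the facet description, not from flats. For simple $M$ all three of these facet types lie inside $\partial\Delta^{3,7}$, and the paper's proof silently restricts to line facets, relying on the fact that in the application (Theorem~\ref{theoremA}) the relevant $\Delta_{M'}$ are common facets of two full-dimensional cells in a subdivision of $\Delta^{3,7}$ and hence cannot lie in $\partial\Delta^{3,7}$. Since being \emph{contained} in $\partial\Delta^{3,7}$ is weaker than \emph{being} a face of $\Delta^{3,7}$, the proposition as literally stated is a bit stronger than what either argument establishes; the extra boundary cases can be handled by the open-immersion arguments of Lemma~\ref{lem:smoothScheme} if you want the full statement.
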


\begin{proof}
	By Lemmas \ref{lem:reduceToConnected}, \ref{lm:simpleReduction}, \ref{lem:smooth} and \ref{lem:smooth2}, we may assume that $M$ is simple, connected, every element in $[7]$ is contained in a line, and there is a  line $\eta$ with the property that every $i\in \eta$ is contained in another line. There are only six such matroids. We list these in~\Cref{table:37}, together with their nondegenerate subsets (up to symmetry) that define internal facets, i.e., those facets that are not faces of $\Delta(3,7)$. This has the effect of excluding the subsets of size $1$ or $6$.  The representatives of the nondegenerate subsets are chosen so that $\{0,1,2\}$ is a basis of both $M_{\eta}$ and $M$ whenever we need to perform an explicit computation in affine coordinates. 
\begin{center}
	\begin{table}[tbh]
		\centering
		\begin{tabular}{|l |l| l|} 
			\hline
			&$\cL(M)$ & Internal Facets \\
			& & ($\Aut(M)$-representatives)  \\ [0.2ex] 
			\hline\hline
			7.1	&$\{0,1,3\}, \{0,2,4\}, \{1,2,5\}, \{2,3,6\},  \{4,5,6\}$ & (1) $\{0,1,3\}$, (2) $\{0,2,4\}$  \\ 
			
			\hline
			7.2	&$\{0,1,3\}, \{0,2,4\}, \{0,5,6\}, \{1,2,5\}, \{1,4,6\},$ & (1) $\{0,1,3\}$ \\
			&$ \{2,3,6\} $   &\\ 
			\hline
			7.3	&$\{0,1,3\}, \{0,2,4\}, \{1,2,5\},  \{4,5,6\} $ & (1) $\{0,1,3\}$, (2) $\{0,2,4\}$ \\ 
			\hline
			7.4	&$\{0,1,3\}, \{0,2,4\}, \{1,2,5\}, \{2,3,6\}$ & (1) $\{0,1,3\}$, (2) $\{0,2,4\}$  \\ 
			\hline
			7.5	&$\{0,1,3\}, \{0,2,4\}, \{1,2,5\}, \{1,4,6\},  \{2,3,6\}  $ & (1) $\{0,1,3\}$, (2) $\{1,2,5\}$  \\ 
			\hline
			7.6	&$\{0,1,5\}, \{0,2,3,6\}, \{1,4,6\}, \{3,4,5\}  $ & (1) $\{0,1,5\}$,   (2) $\{0,2,3,6\}$   \\ 
			\hline
		\end{tabular}
		\caption{The  simple connected rank $3$ matroids on $[7]$ relevant to \Cref{prop:37smoothmaps}, together with nondegenerate subsets defining internal facets}
		\label{table:37}
	\end{table}
\end{center}

Cases 7.3(1), 7.4(2), 7.5(2), 7.6(2) follow from~\Cref{lem:smooth}, and case 7.6(1) is similar to the case worked out in the proof of \Cref{prop:smoothmap} (indeed, the matroid in 7.6 is obtained by adding an element to a line of this matroid). For these remaining cases, we proceed by a direct computation using affine coordinates as in \Cref{construction}. The first 3 columns of $X$ will always be the identity matrix.   As in the proof of \Cref{prop:smoothmap}, the isomorphism $M_{\eta} \cong M/\eta \oplus M|\eta$ gives a simple way to identify $M_{\eta}$.

  Let $M'\lessdot M$ be the pair in Case 7.1(1).  Then $R_{M'}^x =\kk[x_{00}^{\pm},x_{10}^{\pm},x_{21}^{\pm},$ $x_{22}^{\pm}, x_{23}^{\pm}]$ and $R_{M}^x$ is the quotient of $(S_{M}^{x})^{-1}R_{M'}^x [x_{01}^{\pm}, x_{12}^{\pm}, x_{03}^{\pm}, x_{13}^{\pm}]$ by the ideal
\begin{equation*} 
 \left\langle x_{00}x_{13} - x_{10}x_{03},\ 
x_{01}(x_{12}x_{23} - x_{22}x_{13}) - x_{21}x_{12}x_{03}  \right\rangle.
 \end{equation*} 
 Because $\overline{X}_{056} = x_{12}x_{23} - x_{22}x_{13}$ is in $S_{M}^{x}$, we have $R_{M}^x \cong R_{M'}^x[x_{12}^{\pm},x_{13}^{\pm}]$. So $R_{M'}^x \rightarrow R_M^x$ may be identified with the inclusion $R_{M'}^x \subset (S_M^x)^{-1}R_{M'}^x[x_{12}^{\pm}, x_{13}^{\pm}]$.  Therefore  $\varphi_{M,M'}$ is a SDC-morphism.

Next consider the pair $M'\lessdot M$ in case 7.1(2).  Then $R_{M'}^x = \kk[x_{10}^{\pm},x_{01}^{\pm},x_{21}^{\pm}, $ $x_{12}^{\pm},x_{13}^{\pm}]$. By eliminating the variables $x_{00}$ and $x_{22}$ from $R_M^x$, we may identify the morphism $R_{M'}^x\rightarrow R_M^x$ with the inclusion $R_{M'}^x \subset (S_{M}^x)^{-1}R_{M'}^x[x_{03}^{\pm}, x_{23}^{\pm}]$.  
Therefore $\varphi_{M,M'}$ is SDC.  Because the matroid in 7.3 is obtained from $M$ by removing one line, case 7.3(2) is similar. 

Finally consider the pair $M'\lessdot M$ in case 7.2(1).  Then $R_{M'}^x = \kk[x_{00}^{\pm},x_{10}^{\pm},x_{21}^{\pm},$ $x_{22}^{\pm},$ $x_{23}^{\pm}]$ and $R_{M}^x$ is the quotient of $(S_M^{x})^{-1}R_{M'}^x[x_{01}^{\pm},x_{12}^{\pm},x_{03}^{\pm},x_{13}^{\pm}]$ by the ideal
\begin{equation*}
 \langle x_{12}x_{23} - x_{13}x_{22}, x_{01}x_{23} - x_{21}x_{03}, x_{00}x_{13} - x_{10}x_{03}  \rangle.
\end{equation*}
By eliminating the variables $x_{13}$, $x_{03}$, and $x_{12}$, 
 $R_{M'}^x \rightarrow R_M^x$ may be identified with the inclusion $R_{M'}^x \subset (S_M^{x})^{-1}R_{M'}^x[x_{01}^{\pm}]$. Therefore $\varphi_{M,M'}$ is SDC.  Because the matroids in 7.4 and 7.5 are obtained by removing two, resp. one, lines from $M$, cases 7.4(1) and 7.5(1) are similar. 
\end{proof}

\section{Smoothness and irreducibility of initial degenerations}
\label{sec:smoothIrreducibleDeg}

Let $M$ be a $\kk$-realizable $\mat{2}{n}$, $\mat{3}{6}$, or $\mat{3}{7}$ matroid. We compile the results of the previous sections to prove the following more general version of \Cref{theoremA}.

\begin{theorem}
	\label{thm:initDegTSC}
The initial degenerations  $\inw_w\Gr_{M}$ are smooth and irreducible for all $w\in \TGr_M$.
\end{theorem}

\noindent By \Cref{cor:indegMSDIso}, we must show that $\Gr_{M,w}$  is smooth, irreducible, and has the same dimension as $\Gr_M$. Thanks to \Cref{pr:FromAllToCodim1Diagram}, we may compute  $\Gr_{M,w}$ as a limit over a diagram induced by a graph as in \Cref{ex:diagramDualGraph}. When $\grdiv{M}{w}$ is a tree, \Cref{lm:limitTree} tells us that $\Gr_{M,w}$ is smooth and irreducible when $\Gr_{M_Q}$ is smooth and irreducible  and $\varphi_{M_Q,M_{Q'}}:\Gr_{M_Q}\to \Gr_{M_{Q'}}$ is a SDC-morphism for  $Q\in \TC{M,w}$ and $Q'\lessdot Q$ not a face of the hypersimplex. This is illustrated in \Cref{ex:limitTree}. 
However, when $\Gamma_{M,w}$ is not a tree, this data is insufficient to conclude that $\Gr_{M,w}$ is smooth and irreducible,  see  \Cref{rmk:equalizerCounterexample}. 

Let $\eta\subset [n]$. In the examples below and in \Cref{sec:higherGrassmannians}, we encounter the matroids $M(\eta)$ and $M(\eta)'$ defined by
\begin{equation}
\label{eq:Mlambda}
\!\cB(M(\eta)) =  \left\{\left. \beta \in \textstyle{[n]\choose 3}   \right|  |\beta\cap \eta | \geq 2 \right\}, \hspace{1pt} \cB(M(\eta)') =  \left\{\left. \beta \in \textstyle{[n]\choose 3}  \right|   |\beta\cap \eta| = 2 \right\}.
\end{equation}
A simple computation in affine coordinates yields $\dim \Gr_{M(\eta)}  = n+2|\lambda|-7$, and $\dim \Gr_{M(\eta)'} = n+|\lambda|-5$. Also, we set  $f_{\lambda} = e_{\lambda_1} \wedge \cdots \wedge e_{\lambda_d} \in \wedge^d\RR^n$ for  $\lambda = \{\lambda_1,\ldots,\lambda_d\}$.

\begin{example}
	\label{ex:limitTree}
Let
\begin{equation*}
w=f_{013}+f_{024}+f_{056}+f_{125}+f_{146}+f_{236}
\end{equation*}
and $C$ matroid 7.2  from \Cref{table:37}.    The adjacency graph $\Gamma_w$ is a star tree with $v_{Q_C}$ as the central vertex and a leaf vertex $v_{Q_{M(ijk)}}$  for each $\{i,j,k\}\in \cL(C)$.  The edge between $C$ and $M(ijk)$ corresponds to the matroid $M(ijk)'$. Because $\Delta_w$ is a matroid subdivision and does not lie in the relative interior of the Fano cone as in \cite[Theorem~2.1]{HJJS09}, $w\in \TGrU{3}{7}$. The isomorphism from \Cref{pr:FromAllToCodim1Diagram} yields
\begin{equation*}
 \Gr_{w} \cong \Gr_C \times_{\prod \Gr_{M(ijk)'}} \prod \Gr_{M(ijk)}.
\end{equation*}
 The thin Schubert cell $\Gr_C$ is smooth and irreducible by \Cref{prop:37SmoothIrreducible} and the  $\Gr_{M(ijk)} \rightarrow \Gr_{M(ijk)'}$ are SDC-morphisms by  \Cref{prop:37smoothmaps}. From the preceding comments, $\dim \Gr_{M(ijk)} = 6$ and $\dim \Gr_{M(ijk)'} = 5$.  A simple computation in affine coordinates yields $\dim \Gr_{C} = 6$. By \Cref{lm:limitTree}, $\Gr_w$ is smooth and irreducible of dimension $12$, as is $\inw_w\GrU{3}{7}$ by \Cref{cor:indegMSDIso}.
\end{example}

When the maximal cells $Q$ of $\Delta_{M,w}$ all share a common vertex, we may determine whether $R_{M,w}^x$ defines a smooth and irreducible $\kk$-scheme by hand, as illustrated in \Cref{ex:affineCoordinates}. However,  many matroid subdivisions  do not have this property, e.g., the subdivision $\Delta(3,7)$ in the previous example.

\begin{example}
	\label{ex:affineCoordinates}
	Let $M$ be the following matroid
	\begin{equation*}
    \cL(M) = \{ \{0,2,4\},\ \{0,3,6\},\ \{1,2,3\},\   \{1,4,6\},\  \{2,5,6\}     \}.
	\end{equation*}
	and set
	\begin{equation*}
	w=- f_{013} + f_{345} - f_{016} + f_{245} + f_{246} + f_{234} + f_{145} - f_{135} - f_{136} + f_{124} + f_{456}.  
	\end{equation*}
	The subdivision $\Delta_{M,w}$ is matroidal.  The matroids of maximal cells and   $\Gamma_{M,w}$ are illustrated in \Cref{fig:affineCoordinatesEx}. Similar to \Cref{ex:limitTree}, we see that $w\in \TGr_M$. Because $\{0,1,2\}$ is a basis for each $M_i$, $R_{M,w}^x$ may be computed using affine coordinates as in \Cref{construction}.  Assume that the first 3 columns of the matrix $X$ form the identity. We have $I_{M_2}^x = I_{M_3}^x = \langle 0 \rangle$, and 
	\begin{align*}
	I_{M_0}^x = \langle x_{10}x_{23} - x_{13}x_{20}, x_{01}x_{23} - x_{21}x_{03} \rangle,  && I_{M_1}^x = \langle x_{02}x_{13} - x_{03}x_{12} \rangle. 
	\end{align*}
	Therefore $I_{M,w}^x = \langle x_{10}x_{23} - x_{13}x_{20}, x_{02}x_{13} - x_{03}x_{12}, x_{01}x_{23} - x_{21}x_{03}  \rangle$. Because $x_{03},x_{13}$ are in $S_{M,w}^x$, we may solve for these variables to produce an isomorphism
	\begin{equation*}
	R_{M,w}^x \cong (S_{M,w}^x)^{-1} \kk[x_{01}^{\pm},x_{02}^{\pm},x_{10}^{\pm},x_{12}^{\pm}, x_{20}^{\pm},x_{21}^{\pm},x_{22}^{\pm},x_{23}^{\pm}].
	\end{equation*}
	This realizes $\Gr_{M,w}$ as an open subscheme of $\GGm^8$. Therefore $\Gr_{M,w}$ is smooth and irreducible of dimension $8$, as is $\inw_w\Gr_{M}$ by \Cref{cor:indegMSDIso}.
\end{example}

\begin{figure}[tbh]
	\centering
	\includegraphics[scale=0.6]{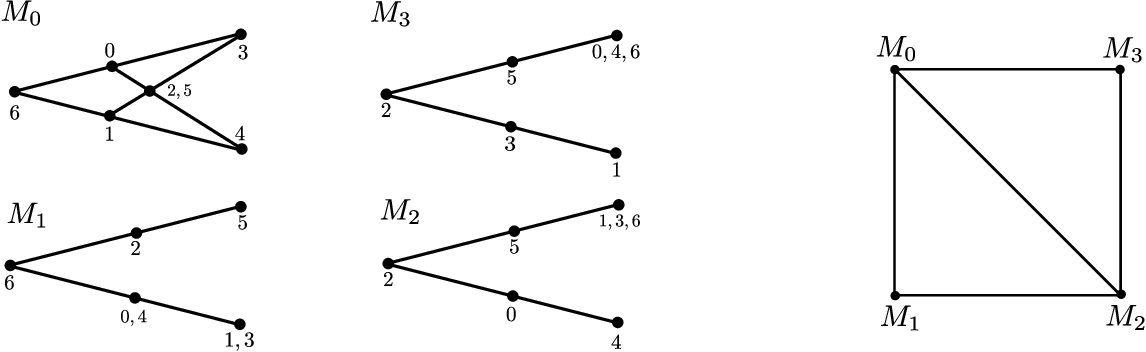}
	\caption{The matroids and adjacency graph appearing in \Cref{ex:affineCoordinates}} \label{fig:affineCoordinatesEx}
\end{figure}

In general, we use a combination of the above techniques to show that all of the $\tsc{M,w}$ are smooth and irreducible. \Cref{lm:centerlimit} handles the case where $\grdiv{M}{w}$ has no leaves, showing that $\Gr_{M,w}$ is smooth and irreducible by a direct analysis of $R_{M,w}^x$ (for these subdivisions, all  maximal cells share a common vertex).  We take care of the remaining cases using this lemma together with \Cref{prop:connectedFiberProduct}, which considers the behavior of smoothness and irreducibility under pullbacks.

\begin{lemma}\label{lm:centerlimit}
	Let $M$ be a $\kk$-realizable, rank $3$-matroid on $[6]$ or $[7]$, and $w\in \TGr_{M}$ such that $\grdiv{M}{w}$ has no leaves. Then $\tsc{M,w}$ is smooth an irreducible. 
\end{lemma}

\begin{proof}
	By \Cref{lm:simpleIsEnoughInitialDeg}, we may assume that $M$ is simple.  We will work with affine coordinates as in \Cref{construction}, and follow a strategy similar to \Cref{ex:affineCoordinates}.  First, suppose $e_{\beta}$ common to all $Q\in \TC{M,w}$ (if such a vertex exists). Let the columns of $X$ prescribed by $\beta$ be the identity matrix.
	  Let 
	  \begin{equation*}
	  g_{M_j}^{x} = \prod_{\lambda\in\cB(M_j)} \overline{X}_{\lambda} \hspace{10pt} \text{ and }  \hspace{10pt} g_{M,w}^{x} = \prod_{Q_{M_j} \text{ maximal}}g_{M_j}^{x}.
	  \end{equation*}
	  Finally, let $J_{M,w}^x =\left( I_{M,w}^x : (g_{M,w}^x)^{\infty} \right) \subset B_M$ (saturation here has the effect of removing the primary components of the irrelevant ideal).  
    By \Cref{prop:limitideal}, to show that $\Gr_{M,w}$ is smooth an irreducible, it suffices to show that  the the quotient of $\kk[x_{ij}^{\pm} \, | \, \lambda_{ij}\in \cB(M)]$ by the extension of $J_{M,w}^x$ 
	is isomorphic to a Laurent polynomial ring.   Due to the large number of cases that we need to check, we will find it more convenient to show that $\kk[x_{ij}^{\pm}]/(J_{M,w}^x\cdot \kk[x_{ij}^{\pm}])$ has this property. 
	
	We  proceed by a direct computation, using computer assistance. (We  emphasize that  this computation may be carried out by hand for any individual $w$, once $\Delta_{M,w}$ is computed. We use a computer due to the large number of cases.) Representatives  $w$ of the cones in $\cG_{M}$ were computed in proof of~\Cref{prop:3nDimension}, along with the  subdivisions $\Delta_{M,w}$. We use ~\MT~ to compute the adjacency graphs and catalog those $w$ such that $\grdiv{M}{w}$ has no leaves. There are 17 such graphs among all simple $\mat{3}{6}$-matroids, and $877$ for  $\mat{3}{7}$.  For each such $(M,w)$, there is a vertex $e_{\beta}$ common all of the  $Q\in \TC{M,w}$.  We choose such a  $\beta$ that is maximal with respect to the \texttt{revLex} order, compute $J_{M,w}^x$ as above, and consider its extension to $\kk[x_{ij}^{\pm}]$.  While this produces a large number of ideals, many end up being the same. For $\mat{3}{6}$, computing the ideals takes about 15 seconds, and there are 3 unique ideals:
	 \begin{equation*}
	 \langle 0\rangle, \langle x_{02}x_{11} - x_{01}x_{12} \rangle, \langle x_{02}x_{10} - x_{00}x_{12} \rangle \subset \kk[x_{ij}^{\pm}].
	 \end{equation*}
	 By solving for $x_{12}$ in the last two ideals, we see that the quotients $\kk[x_{ij}^{\pm}]/(J_{M,w}^x\cdot \kk[x_{ij}^{\pm}])$ are all isomorphic to Laurent polynomial rings.	 For $\mat{3}{7}$, this computation takes about $50$  minutes. 	 We list these ideals in \Cref{appendix:37Data}, together with variables that may be eliminated to produce an isomorphism of $\kk[x_{ij}^{\pm}]/(J_{M,w}^x\cdot \kk[x_{ij}^{\pm}])$ with a Laurent polynomial ring. 
\end{proof}

Let $G$ be a connected graph. Given a leaf-vertex $v$, the \textit{branch} of $G$ containing $v$ is the largest full subgraph of $G$ that contains $v$ and does not meet any cycle of $G$ (note that this is non-standard terminology).   

\begin{theorem}
	\label{thm:limitSmoothIrreducible}
	Let $w\in \TGr_M$ where $M$ is a $\kk$-realizable $\mat{2}{n}$, $\mat{3}{6}$, or $\mat{3}{7}$ matroid. Then $\Gr_{M,w}$ is smooth and irreducible.  
\end{theorem}

\begin{proof}
	By \Cref{pr:FromAllToCodim1Diagram}, we know that $\tsc{M,w}$ is isomorphic to a limit over the adjacency graph $\grdiv{M}{w}$. Since all relevant thin Schubert cells are smooth and irreducible, and all relevant morphisms are SDC, we may use \Cref{lm:limitTree} to conclude that $\tsc{M,w}$ is smooth an irreducible when $\grdiv{M}{w}$ is a tree. In particular, this completes the proof in the $d=2$ case. 
	
	Now suppose $(d,n)=(3,6)$ or $(3,7)$. We need only consider those $w$ such that $\grdiv{M}{w}$ is not a tree.   We proceed by induction on the largest diameter of a branch of $\Gamma_{M,w}$.  When $\grdiv{M}{w}$ has no leaves, $\tsc{M,w}$ is smooth and irreducible from \Cref{lm:centerlimit}, hence the base case of the induction.  	
	
	Let $v_{Q_{1}},\ldots,v_{Q_{k}}$ denote leaf vertices of $\grdiv{M}{w}$, $L_i = M_{Q_{i}}$, and $L_{i}'$ for the matroid corresponding to the edge adjacent to $v_{Q_i}$.  There is a hyperplane $H_k$ in $\RR^n$ such that $\Delta_{M} \cap H_k = \Delta_{L_k'}$, with $\Delta_{L_k}$ in one of the halfspaces of this hyperplane. The polytope given by the intersection of $\Delta_{M}$ with the other halfspace is also a matroid polytope: it is the convex hull of the vertices $e_{\beta}$ such that $\beta\in (\cB(M) \setminus \cB(L_k) ) \cup \cB(L_k')$. The adjacency graph to this subdivision is obtained by removing the vertex and edge corresponding to $\Delta_{L_k}$ and $\Delta_{L_k'}$ respectively. Repeating this procedure for the remaining $L_i$'s, we see that the union of the polytopes corresponding to non-leaf vertices in $\grdiv{M}{w}$ is a matroid polytope. We denote the corresponding matroid by $C$.   	 By   \Cref{pr:limitContractGraph} and \ref{pr:FromAllToCodim1Diagram}, 
	\begin{equation*} 
	\Gr_{M,w} \simeq  \Gr_{C,w} \times_{\prod \tsc{L_i'}}\prod \tsc{L_i}. 
	\end{equation*} 
	The $\kk$-scheme $\Gr_{C,w}$ is smooth and irreducible by the inductive hypothesis and  $\varphi_{L_i,L_i'}$ are SDC-morphisms by~\Cref{prop:smoothmap} and~\Cref{prop:37smoothmaps}. We conclude that $\tsc{M,w}$ is smooth and irreducible by~\Cref{prop:connectedFiberProduct}. 
\end{proof}

\begin{corollary}
	\label{cor:limitRank3}
	If $M$ is a $\kk$-realizable  $\mat{3}{6}$ or $\mat{3}{7}$ matroid and $w\in \TGr_M$, then $\psi_{M,w}:\inw_w\Gr_M \to \Gr_{M,w}$ is an isomorphism. 
\end{corollary}

\begin{proof}
	 By  \Cref{thm:closedEmbedding} and \Cref{prop:3nDimension}, $\psi_{M,w}$ is a closed immersion of affine schemes of the same dimension. Moreover  $\Gr_{M,w}$ is integral by \Cref{thm:limitSmoothIrreducible}. Therefore $\psi_{M,w}$ is an isomorphism by \Cref{prop:closedImmersionIso}.
\end{proof}

\begin{proof}[Proofs of \Cref{theoremA} and \Cref{thm:initDegTSC}]
These theorems follow from  \Cref{prop:limitRank2},  \Cref{thm:limitSmoothIrreducible}, and \Cref{cor:limitRank3}.
\end{proof}

\section{The log canonical compactification of $\XU{3}{7}$}
\label{sec:logCanonical}

\noindent We now prove  \Cref{theoremB}, that the normalization of the Chow quotient of $\GrC{3}{7}$ by the maximal torus $\PGL(7)$ is the log canonical compactification of $\XU{3}{7}$. For background on log minimality and log canonical compactifications, we see the introduction of \cite{HackingKeelTevelev2009}, for  the Chow quotient of $\GrC{d}{n}$, see \cite{Kapranov1993} \cite[Section~2]{KeelTevelev2006}, and for sch\"on compactifications, see \cite{hacking, LuxtonQu, Tevelev2007}. 
Throughout this section, we use the following notation for polyhedral fans and toric varieties  that is consistent with \cite{Fulton}.   Let $N$ be a lattice, $T_{N}$ its torus, and $\Sigma$ a rational polyhedral fan in $N_{\RR}$. When $T$ is a torus, we write $N_{T}$ for its cocharacter lattice.  Given a cone $\sigma$ of $\Sigma$, $N_{\sigma}$ denotes the saturated sublattice of $N$ generated by $\sigma\cap N$, $N(\sigma) = N/N_{\sigma}$, and $\St(\sigma)$ the star of $\sigma$, viewed as a fan in $N(\sigma)_{\RR}$.
We write $X(\Sigma)$ for the toric variety of $\Sigma$.

Let $H \subset \PGL(n)$ be the maximal torus, $M$ a loop-free matroid, and $T(M)$ the dense torus of $\Proj(B_M)$. As before, we let $\{e_i \, | \, i\in [n]\}$ denote the standard basis of $\ZZ^n$ and $f_{\lambda} = e_{\lambda_1}\wedge \cdots \wedge e_{\lambda_d}$ for $\lambda = \{\lambda_1,\ldots,\lambda_d \}$. 
The cocharacter lattices of $H$ and $T(M)$ isomorphic to   $\ZZ^n/\zspan{\mathbf{1}}$ and $\ZZ^{\cB(M)}/\zspan{\mathbf{1}}$, respectively.  The torus $H$ embeds into $T(M)$ by 
\begin{equation}
\label{eq:HinT}
N_H \to N_{T(M)} \;\;\;\;\;\;\;\;
e_{i} \mapsto \sum_{\lambda\ni i} f_{\lambda}. 
\end{equation}
Thus $H$ acts on $\Proj(B_M)$ via the action of $T(M)$. This restricts to a free action on $\Gr_M$, and we set $X_M = \Gr_M/H$. The quotient $\Gr_M \to X_M$ is induced by a monomial ring map \cite[Proposition~2.1]{GibneyMaclagan}, therefore    $\Trop X_M = \TGr_M/(N_H)_{\RR}$.

Now we focus on $M=U(d,n)$;  In this case we write $T=T(U(d,n))$ and  $\XU{d}{n}= X_{U(d,n)}$. The Pl\"ucker embedding induces an closed immersion  of Chow quotients 
$\GrC{d}{n}/\!/H \hookrightarrow \PP(\wedge^d\kk^n) /\!/H$.  By \cite{Kapranov1993, KapranovSturmfelsZelevinsky}, the normalization of $\PP(\wedge^d\kk^n) /\!/H$ is the toric variety $Y_{d,n} := X(\SFU{d}{n}/(N_H)_{\RR})$.   
Let $\XCL{d}{n}$ be the closure of $\XU{d}{n}$ in $Y_{d,n}$.  Then  $Y_{d,n} \to   \PP(\wedge^d\kk^n) /\!/H$ induces a birational morphism  $\XCL{d}{n} \to \GrC{d}{n}/\!/H$, thus both have the same normalization, which we denote by $\XC{d}{n}$. When $n=6,7$,  $\XC{3}{n}$ is also the closure of $\XU{3}{n}$ in $X(\mathcal{S}_{3,n}')$ where $\mathcal{S}_{3,n}' = \mathcal{S}_{3,n}/(N_{H})_{\RR}$.

\begin{lemma}
	\label{lem:X37schon}
	The initial degenerations of $\XU{3}{7}$ are smooth and irreducible. In particular, $\XC{3}{7}$ is a sch\"on compactification of $\XU{3}{7}$.
\end{lemma}

\begin{proof}
Let $N_{H}^{\sat}$ denote the saturation of the image of the map from \Cref{eq:HinT}. A splitting of the exact sequence $0\to N_{H}^{\sat} \to N_T \to N_{T/H} \to 0$  induces an isomorphism  $\GrU{d}{n} \cong \XU{d}{n} \times H$.  As stated earlier, this is monomial at the level of coordinate rings.
	 Therefore $\inw_w\GrU{3}{7} \cong \inw_{\tilde{w}} \XU{3}{7} \times H$ where $\tilde{w}$ is the projection of $w$ to $(N_{T/H})_{\RR}$. The first statement now follows from  \Cref{theoremA}. 
	By~\cite[Theorem~1.5]{LuxtonQu} $\XCL{3}{7}$ is a sch\"on compactification, which is already normal by \cite[Theorem~1.4]{Tevelev2007}.
\end{proof}

\begin{proof}[Proof of  \Cref{theoremB}]
By \Cref{lem:X37schon}, $X$ is a sch\"on compactification of $\XU{3}{7}$. Let $B$ the boundary divisor of $\XU{3}{7}\subset \XC{3}{7}$. To show that $K_{\XC{3}{7}}+B$ is ample, we follow a strategy laid out in \cite{Luxton2008} based on \cite{HackingKeelTevelev2009}.
	
For each cone $\sigma \in \LFU{3}{7}'$, let $X_{\sigma}$ denote the locally closed stratum of $\XC{3}{7}$ in the corresponding torus orbit of $X(\LFU{3}{7}')$. There is an isomorphism  $\inw_w\XU{3}{7} \cong X_{\sigma} \times T_{N_{\sigma}}$ for any $w$ in the relative interior of  $\sigma$ \cite[Lemma~3.6]{HelmKatz}, so each $X_{\sigma}$ is smooth and irreducible by \Cref{lem:X37schon}. 	Because $\XC{3}{7}$ is a sch\"on compactification,  $(\XC{3}{7},B)$ has at worst toroidal singularities \cite[Theorem~1.4]{Tevelev2007}. By \cite[Theorem~9.1]{HackingKeelTevelev2009}, $K_{\XC{3}{7}}+B$ is ample if and only if each $X_{\sigma}$ is log minimal. We know that $\XU{3}{7}$ is log minimal by \cite[Proposition~2.18]{KeelTevelev2006}, so we need only consider the $X_{\sigma}$ for $\sigma \neq 0$. 
	
By \cite[Lemma~3.3.6]{MaclaganSturmfels2015} $\Trop X_{\sigma}$ is the underlying set of $\St(\sigma)$ in $N(\sigma)_{\RR}$. 	  The stratum $X_{\sigma}$ is sch\"on because its closure in $X(\St(\sigma))$ is a sch\"on compactification. Therefore, $X_{\sigma}$  is either log minimal or preserved by a nontrivial subtorus $S\subset T_{N(\sigma)}$ \cite[Theorem~3.1]{HackingKeelTevelev2009}, which occurs if and only if $\Trop X_{\sigma}$ is invariant under  translation by the subspace $(N_S)_{\RR} \subset N(\sigma)_{\RR}$  \cite[Lemma~5.2]{KatzPayne2011}. So it suffices to show that each $\Trop X_{\sigma}$ is not invariant under translation by any rational subspace of $N(\sigma)_{\RR}$. 	We prove this in \Cref{lem:raysOrigin}, using the necessary condition for such subspaces in  \Cref{lem:translation}.
\end{proof}

\begin{lemma}
	\label{lem:translation}
	Suppose $\Sigma$ is a rational polyhedral fan in $N_{\RR}$ that is invariant under translation by the linear subspace $V\subset N_{\RR}$. Then $V\subset (N_{\sigma})_{\RR}$ for every maximal cone $\sigma$ of $\Sigma$.
\end{lemma}

\begin{proof}
	If $\sigma$ is a maximal cone such that $V\not\subset N_{\sigma}$ then $\dim(V+N_{\sigma})>\dim \Sigma$, therefore $V$ cannot preserve $\Sigma$. 
\end{proof}

\begin{lemma}
	\label{lem:raysOrigin}
For each cone  $\sigma$ of $\LFU{3}{7}'$, $\Trop(X_{\sigma})$ is not preserved under translation by any rational subspace of $N(\sigma)_{\RR}$.
\end{lemma}

\begin{proof}
 The case $\sigma=0$ follows from the fact that $\XU{3}{7}$ is sch\"on and log minimal as in the proof of \Cref{theoremB}, so we focus on $\sigma\neq 0$.  By \Cref{lem:translation},  to prove that $\Trop(X_{\sigma})$ is not preserved under translation by any rational subspace of $N(\sigma)_{\RR}$ it suffices to show
	\begin{equation}
	\label{eq:intersectionStar}
	(N_{\sigma})_{\RR} = \bigcap\, (N_{\tau})_{\RR}
	\end{equation}
	where the intersection is taken over all maximal cones $\tau\in\St(\sigma)$. By symmetry, it suffices to show \Cref{eq:intersectionStar} for a collection of $S_7$-orbit representatives of the $\sigma$.
	
The Gr\"obner fan $\cG_{3,7}$ was computed in~\gfan~(as before), and we use~\sage~to compute $\LFU{3}{7}'$ by grouping together those cones that correspond to the same matroid subdivision of $\Delta(3,7)$.  	The $f$-vector (starting at dimension 0) for $\LFU{3}{7}'$ up to $S_7$-symmetry is
		\begin{equation*}
		f(\LFU{3}{7}' \mod S_7) = (1, 5, 30, 107, 217, 218, 94). 
		\end{equation*}
	For each representative we compute $\St(\sigma)$ and the intersection in \Cref{eq:intersectionStar}, also in~\sage. This part of the computation may be completed in under 5 minutes on a standard desktop computer.
\end{proof}

\begin{remark}
	\label{rmk:Luxton}
	A direct adaptation of Luxton's methods for proving that $\XU{3}{6}$ is sch\"on does not work for $\XU{3}{7}$, as we now describe.
	For a degree $9-n$ del Pezzo surface $S$, let $e_1,\ldots, e_n, h$ denote the standard generators of $\Pic S$, and $K = 3h-\sum e_i$ the canonical class. In this remark, we focus on the cases $n=6,7$.  The subspace  $K^{\perp}$ contains the root system $E_n$, let $\Lambda$ be the $\ZZ$-lattice generated by  $E_n$.  Set $\alpha_{ij} = e_i-e_j$,  $\beta = 2h - \sum e_i$  (when $n=6$), and $\beta_j = 2h - \sum_{i\neq j} e_i$ (when $n=7$). 
	
	As in the introduction,  $Y^n$ denotes the moduli space of smooth marked del Pezzo surfaces of degree $9-n$, and $\cF_n$ its log canonical fan, whose support is $\Trop Y^n$. We recall the description of $\cF_n$ in  \cite{HackingKeelTevelev2009}. 
	There is an exact sequences of free abelian groups
	\begin{equation*}
	0 \to \Sym^2\Lambda^{\vee} \xrightarrow{\phi} \ZZ^{(E_n)_{+}} \xrightarrow{\psi} N(E_n) \to 0
	\end{equation*}  
	where $\phi(f) = \sum_{\alpha\in (E_n)_{+}} f(\alpha)\cdot \alpha$.  Given a root subsystem $\Theta$, let $\psi(\Theta) = \sum \psi(\alpha)$ where the sum is over the positive roots of $\Theta$. The set of  $A_i$ root subsystems of $E_n$ is denoted by $\cA_i$ and let
	\begin{equation*}
	\cR(E_6) = \cA_1 \sqcup (\cA_2\times \cA_2 \times \cA_2), \hspace{30pt} \cR(E_7) = \cA_1\sqcup \cA_2 \sqcup (\cA_3\times\cA_3) \sqcup \cA_7.
	\end{equation*}
	The rays of $\cF_n$ are $\psi(\Theta)$ for any $\Theta\in \cR(E_n)$, and $\psi(\Theta_1),\ldots, \psi(\Theta_{k})$ span a cone if and only if $\Theta_{i}\perp \Theta_j$,  $\Theta_i\subset \Theta_j$, or $\Theta_j\subset \Theta_i$ (when $n=7$, exclude the ``Fano'' simplices spanned by 7 mutually orthogonal $A_1$ subsystems).

	The open immersion  $Y^{n} \hookrightarrow \XU{3}{n}$ induces a surjective map $\pi:\Trop Y^n \to \Trop \XU{3}{n}$ \cite[Proposition~3.1]{Tevelev2007}. When $n=6$, $\pi$ is induced by a quotient of $N(E_6)_{\RR}$ by  $\Span_{\RR}\{\psi(\beta)\}$.	One step to prove that $\XU{3}{6}$ is sch\"on in Luxton's thesis was to show that the morphism of toric varieties $X(\cF_6) \to X(\LFU{3}{6}')$ is smooth, see \cite[Theorem~4.2.2]{Luxton2008}.  
    When $n=7$, the map $\pi:\Trop Y^n \to \Trop \XU{3}{n}$ is induced by a quotient  of $N(E_7)_{\RR}$ by $\Span_{\RR}\{\psi(\beta_1),\ldots,\psi(\beta_7)\}$.  We claim that the morphism $X(\cF_7) \to X(\LFU{3}{7}')$ is not smooth. 	
	 Let $\Theta_1 = \{\pm \alpha_{ij}\}$ (an $A_1$-subsystem) and  $\Theta_2 = \{\pm \alpha_{ij}, \pm \beta_i, \pm \beta_j \}$  (an $A_2$-subsystem). Because $\Theta_1\subset \Theta_2$, $\sigma = \Span_{\RR_{\geq 0}} \{ \psi(\Theta_1), \psi(\Theta_2)\}$ is a cone in $\cF_7$.  Then $\pi$ maps both rays $\psi(\Theta_1)$, $\psi(\Theta_2)$ to the same ray $\tau$ in $\Trop \XU{3}{7}$. Therefore the restriction of $X(\cF_7) \to X(\LFU{3}{7}')$ to the toric open sets $U_{\sigma} \to U_{\tau}$ is not smooth. 
\end{remark}

\section{Behavior for higher Grassmannians}
\label{sec:higherGrassmannians}

The algebraic properties of both the initial degenerations and the maps between thin Schubert cells that played a central role in the proof of~\Cref{theoremA} fail to hold outside $(d,n)=(2,n)$, $(3,6)$, 
$(3,7)$ and their duals. In this section, we give examples for $d=3$ and $n=8,9$ that show our proof-techniques will not apply beyond the cases treated earlier.
We begin by showing how the analog of \Cref{prop:3nDimension} does not hold when $\charF \kk = 2$ or $n\geq 9$. 

\begin{figure}[tbh]
	\centering
	\includegraphics[scale=0.6]{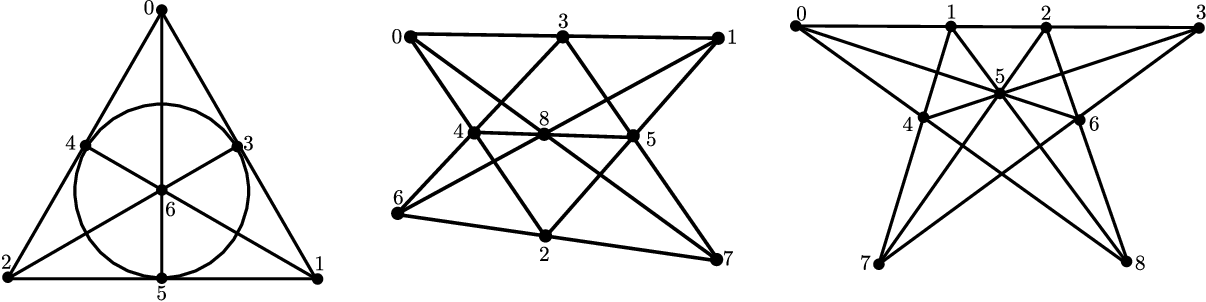}
	\caption{From left to right: the Fano matroid, the Pappus matroid, and the Perles matroid} \label{fig:higherGrassmannians}
\end{figure}

\begin{example}
	\label{ex:37counterexample}
	For this example, suppose $\kk=\overline{\FF}_{2}$. The analog of  \Cref{prop:3nDimension} does not hold in this setting.  Let $F$ be the Fano matroid, i.e. the matroid whose set of lines is 
	\begin{equation*}
	\cL(F) = \{\{0,1,3\},\, \{0,2,4\},\, \{0,5,6\},\, \{1,2,5\},\, \{1,4,6\},\,  \{2,3,6\},\, \{3,4,5\}\}
	\end{equation*}
	as illustrated in  \Cref{fig:higherGrassmannians}. Let $w_F \in \wedge^3\ZZ^{7}$ be the vector
	\begin{equation*}
	w_F =  f_{013} + f_{024} + f_{056} + f_{125} + f_{146} + f_{236} + f_{345}.
	\end{equation*}
	This point lies in  $\TGrU{3}{7}$, as it is the coordinatewise $t$-adic valuation of the Pl\"ucker coordinates of the $\FF_4(\!(t)\!)$-valued matrix 
	\begin{equation*}
	\begin{pmatrix}
	1 & 0 & 0 & 1 & 1 & t & 1 \\
	0 & 1 & 0 & 1 & t & 1 & 1+t \\
	0 & 0 & 1 & t & 1 & 1 & 1+at
	\end{pmatrix}
	\end{equation*}
	where $a\in \FF_4\setminus \{0,1\}$.  The adjacency graph $\Gamma_w$ is a star tree whose central node is $v_{Q_F}$ and has a leaf vertex $v_{Q_{M(ijk)}}$ for each $\{i,j,k\}\in \cL(F)$. The edge adjacent to  $v_{Q_{M(ijk)}}$ corresponds to $M(ijk)'$. A computation in affine coordinates yields $\dim \Gr_F = 6$, hence $\dim \Gr_w = 13$ by   \Cref{prop:closedImmersionIso}. 
\end{example}

\begin{example}
	\label{ex:39counterexample}
	The analog of  \Cref{prop:3nDimension} is also not true for other $(d,n)$, even in characteristic 0. Consider the case $(d,n) = (3,9)$ and let $M_{\Pa}$ be the Pappus matroid, i.e. that matroid set of lines is 
	\begin{align*}
	\cL(M_{\Pa}) = \{&\{0,1,3\},\, \{0,2,4\},\, \{0,7,8\},\, \{1,2,5\},\, \\
	 &\{1,6,8\},  \{2,6,7\},\, \{3,4,6\},\, \{3,5,7\},\, \{4,5,8\} \}
	\end{align*}
	as illustrated in  \Cref{fig:higherGrassmannians}.	Let  $w_{\Pa} \in \wedge^3\ZZ^{9}$ be the vector defined by
	\begin{equation*}
	w_{\Pa} =  f_{013} + f_{024} + f_{078} + f_{125} + f_{168} + f_{267} + f_{346} + f_{357} + f_{458}.
	\end{equation*}
	This point lies in $\TGrU{3}{9}$ as it is the coordinatewise $t$-adic valuation of the Pl\"ucker coordinates of the $\QQ(\!(t)\!)$-valued matrix 
	\begin{equation*}
	\begin{pmatrix}
	1 & 0 & 0 & 2  & 1 & t  & 1+t  & -1   & 1 \\
	0 & 1 & 0 & -3 & t & 1  & -1   &  1-t^2 & 1 \\
	0 & 0 & 1 & 2t & 3 & -2 &  1   &  1   & 1+t
	\end{pmatrix}
	\end{equation*}
	The adjacency graph $\Gamma_w$ is a star tree whose central node is $v_{Q_{M_{\Pa}}}$ and has a leaf vertex $v_{Q_{M(ijk)}}$ for each $\{i,j,k\}\in \cL(M_{\Pa})$. The edge adjacent to  $v_{Q_{M(ijk)}}$ corresponds to $M(ijk)'$. A computation in affine coordinates yields $\dim \Gr_{M_{\Pa}} = 10$, hence  $\dim \Gr_w = 19$ by \Cref{prop:closedImmersionIso}. 
\end{example}
\begin{question}
	Does the analog of  \Cref{prop:3nDimension} hold for $\mat{3}{8}$-matroids?
\end{question}

Next, we discuss  the general behavior of the maps $\varphi_{M,M'}\colon \Gr_{M} \rightarrow \Gr_{M'}$ for $M' \leq M$. Recall that $\varphi_{M,M'}$ is a SDC-morphism whenever $M'$ is a $\mat{3}{6}$-matroid, or $M$ is a $\mat{3}{7}$-matroid and $\Delta_{M'}$ is not a face of $\Delta(3,7)$. When $M'$ is a face of $\Delta(3,7)$, then $\varphi_{M,M'}$ may fail to be dominant.

\begin{example}\label{ex:Bad3_8}
	Let $M$ be the $\mat{3}{7}$-matroid with lines
	\begin{equation*}
	\cL(M) = \{ \{0,3,6\},\, \{1,4,6\},\, \{2,5,6\}\}.
	\end{equation*}
	and $M' = M_{[5]}$. In a projective realization of $M$, the lines $\overline{03}$, $\overline{14}$, and $\overline{25}$ all meet at the point $6$. Observe that $6$ becomes a loop in $M'$, and $M' \cong U(3,6)\oplus U(0,1)$. A projective realization of $U(3,6)$ does not require that  $\overline{03}$, $\overline{14}$, and $\overline{25}$ all meet at a common point. Such a condition defines a codimension 1 subscheme of $\GrU{3}{6}$. Therefore $\varphi_{M,M'}$ is not dominant. Extending the ground set by adding points in linear general position, we get non-dominant morphisms $\varphi_{M,M'}$	for any realizable $\mat{3}{n}$ matroid with $n\geq 7$. By adding elements to the ground set parallel to any of the $\{0,\ldots,6\}$, one may produce non-dominant morphisms $\varphi_{M,M'}$ for $n\geq 8$ where $\Delta_{M'}$ is not a face of $\Delta(3,n)$.   
\end{example}

We end with an example of an initial degeneration of  $\GrU{3}{9}$ that is reducible, proving \Cref{theoremC}. 
Consider the Perles matroid $P$ of nine points and nine lines 
\begin{align*}
\cL(P) = \{ &\{0,1,2,3\},  \{0,4,8\},   \{1,4,7\}, \{0,5,6\},\\
 &\{1,5,8\}, \{3,4,5\}, \{2,5,7\}, \{2,6,8\}, \{3,6,7\}\}.
\end{align*}
This is depicted in~\Cref{fig:higherGrassmannians}. First, we parameterize its thin Schubert cell. 
\begin{proposition}
	\label{pr:Perles1}
	The thin Schubert cell $\Gr_P$ is isomorphic to $X_P \times \GG_m^8$ and
	\[X_P \cong \Spec(\kk[z^{\pm}]/\langle z^2-z-1\rangle).\]
	In particular, $\Gr_P$ has two connected components. 
\end{proposition}

\begin{proof}
	As discussed in the beginning of \Cref{sec:logCanonical}, the maximal torus  $H\subset \PGL_9(\kk)$ acts freely on $\Gr_P$ and $\Gr_P \cong X_P \times H$.  To compute $X_P$, we use affine coordinates similar to~\Cref{construction}. Let $X$ be the matrix with   $(1,0,0)$, $(0,1,0)$, $(0,0,1)$ and $(1,1,1)$, in columns $0$, $1$, $4$ and $5$, respectively. The $H$-action allows us to set  one nonzero entry of each remaining column to be 1. A standard calculation yields
	\begin{equation*} X := \begin{pmatrix}
	1 & 0 & -z & 1 & 0 & 1 & 1-z & 0      & 1  \\
	0 & 1 & 1      & 1      & 0 & 1 & 1 & z & 0 \\
	0 & 0 & 0      & 0      & 1 & 1 & 1      & 1      & 1 
	\end{pmatrix}. \end{equation*} 
	where $z^2-z-1=0$. Therefore  $X_P$ has the required form. 
\end{proof}

Let $w_P\in\wedge^3\ZZ^9$ be the vector
\begin{equation}\label{eq:wp}
w_P= f_{012} + f_{013} + f_{023} + f_{123} + f_{048} + f_{147} + f_{158} + f_{257} + f_{268} + f_{367}.
\end{equation}

\begin{proposition}
	\label{pr:Perles2}
	The vector $w_P$ from~\Cref{eq:wp} induces a dominant open immersion 
	\begin{equation*}
	\tsc{w_P}
	\hookrightarrow   \Spec(\kk[z]/\langle z^2-z-1 \rangle) \times \GG_m^{18}.
	\end{equation*}
	In particular, $\Gr_{w_P}$ is smooth and has two connected components.
\end{proposition}

\begin{proof}
	The adjacency graph $\Gamma_{w_P}$ is a star tree on nine leaves with central node $v_{Q_P}$. For each $\lambda\in \cL(P)$, there is a leaf vertex $v_{Q_{M(\lambda)}}$ connected to $v_{Q_P}$, and $Q_P\cap Q_{M(\lambda)} = Q_{M(\lambda)'}$, where $M(\lambda)$ and $M(\lambda)'$ are defined in \Cref{eq:Mlambda}.  By~\Cref{pr:FromAllToCodim1Diagram},
	\begin{equation*}
	\tsc{w_P} \cong
	\tsc{P} \times_{\prod_{\lambda\in \cL(P)} \tsc{M(\lambda)'}} \prod_{\lambda\in \cL(P)} \tsc{M({\lambda})}.
	\end{equation*}
	The embedding in the statement will be obtained by combining  the associativity of fiber products with~\Cref{pr:Perles1}, ~\Cref{pr:limitContractGraph} and the following two claims.
	\begin{enumerate}[noitemsep]
		\item $\Gr_P\times_{\Gr_{M(\lambda)'}} \Gr_{M(\lambda)} \cong \Gr_P\times\GG_m$  for $\lambda  \in \cL(P)\smallsetminus\{\{0,1,2,3\}\}$, and
		\item $\Gr_P\times_{\Gr_{M(0123)'}} \Gr_{M(0123)}$ embeds into $ \Gr_P \times \GG_m^{2}$ as a dense open subscheme.  
	\end{enumerate}
	Both verification are similar. We show the second one since it is more subtle.  To simplify notation, let $M = M(0123)$ and $M' = M(0123)'$. We use affine coordinates as in \Cref{construction}. Let $X$ be the matrix of variables $x_{ij}$ such that the columns 0,1,4 form the identity.  
	Then $R_{M'}^x \cong S'^{-1}\ZZ[x_{ij}^{\pm}  \,|\, ij \neq 02, 12]$ and $R_{M}^x \cong S^{-1}R_{M'}^x[x_{20}^{\pm}, x_{21}^{\pm}]$, where $S'$ is generated by $X_{234}$ and $S$ is generated by $X_{023}, X_{123}$. From this we conclude that
	\begin{equation*}
	\Gr_P\times_{\Gr_{M'}} \Gr_{M} \cong \Spec (S^{-1}  R_P^x [x_{20}^{\pm}, x_{21}^{\pm}]).
	\end{equation*}
	Since the $x_{00}$, $x_{01}$, $x_{10}$, and $x_{11}$ are units in $R_P^x$, $X_{023}$ and $X_{123}$ are not zero divisors in the above ring. Therefore the natural map $R_P^x[x_{20}^{\pm}, x_{21}^{\pm}] \to  S^{-1}  R_P^x [x_{20}^{\pm}, x_{21}^{\pm}]$ is injective and (ii) holds.
\end{proof}

\begin{lemma}
	\label{lm:Perles3} 
	We have an isomorphism $\init_{w_P}\GrU{3}{9} \cong
	\tsc{w_P}$.
\end{lemma}

\begin{proof}
	Because $\psi_{w_P}:\inw_w\GrU{3}{9} \hookrightarrow \Gr_{w_P}$ is a closed immersion of affine schemes of the same dimension and $\Gr_{w_P}$ is reduced, it suffices to show that the image of $\psi_{w_P}$ meets the two connected components of $\Gr_{w_P}$ by \Cref{prop:closedImmersionIso}. 	 Consider the following $\kk(\!(t)\!)$-valued matrix
	\begin{equation*} 
	X(a) := \begin{pmatrix}
	1 & 0 & -a & 1   & 0 & 1   & 1-a  & t & 1  \\
	0 & 1 & 1  & 1+t & 0 & 1   & 1    & a & 3t \\
	0 & 0 & t  & 2t  & 1 & 1+t & 1+2t & 1 & 1+3t 
	\end{pmatrix}. 
	\end{equation*} 
	where $a=b,\bar{b}$ are the distinct solutions to $z^2-z-1 = 0$. Let $p_a$ be the Pl\"ucker vector of $X(a)$.  One may verify that the coordinatewise valuation of $p_a$ is $w_P$. The exploded tropicalization $\mathfrak{Trop}(p_a)$ is an element of $\inw_w\GrU{3}{9}$ \cite[Lemma~3.2]{Payne}, and  $\psi_{P,w}$ maps $\mathfrak{Trop}(p_b)$, $\mathfrak{Trop}(p_{\bar{b}})$ to different connected components of $\Gr_{w_P}$, as required.
\end{proof}

\begin{proof}[Proof of~\Cref{theoremC}]
\Cref{pr:Perles2} and \Cref{lm:Perles3} yield a dominant open immersion $\init_{w_P}\GrU{3}{9} \hookrightarrow \Spec(\kk[z]/\langle z^2-z-1 \rangle) \times \GG_m^{18}$. Therefore $\init_{w_P}\GrU{3}{9}$ is smooth with two connected components. 
\end{proof}

\appendix

\section{Some functorial properties of SDC-morphisms}
\label{appendix:SDC}

Throughout this section, all $\kk$-schemes are Noetherian and of finite-type over $\kk$. In addition, all morphisms of $\kk$-schemes are also of finite-type. Recall from the beginning of~\Cref{sec:smoothnessOfTSC} that a SDC-morphism of $\kk$-schemes is one that is smooth and dominant with connected fibers. In this section, we will catalog properties of SDC-morphisms used throughout the paper.  First, we discuss how to deduce smoothness or connectedness of a $\kk$-scheme $X$ from properties of a morphism $X\to Y$ and $Y$. 

\begin{proposition} 
	\label{prop:SDCOpen}
	Let $X,Y$ be $\kk$-schemes as above. 
	\begin{enumerate}[noitemsep]
		\item  If $f:X\to Y$ is a dominant morphism with connected fibers and $Y$ is irreducible, then $X$ connected.
		\item If $f:X\to Y$ is a SDC-morphism and $Y$ is smooth and irreducible, then so is $X$.
	\end{enumerate}
\end{proposition}

\begin{proof}
	 Let $V$ be the image of $f$. Since $f$ is dominant and $Y$ is irreducible, $V$ is also irreducible. Therefore, $f:X\to V$ is a surjective morphism with connected fibers. We conclude that $X$ is connected by ~\cite[\href{https://stacks.math.columbia.edu/tag/0378}{Tag 0378}]{stacks-project}. Finally, (2) follows easily from (1). 
\end{proof}

Next, we explore how SDC-morphisms behave under base change. This proposition is crucial in the proof of \Cref{theoremA} as it will allow us to  deduce smoothness and irreducibility of initial degenerations by studying thin Schubert cells and the morphisms between them.

\begin{proposition}
	\label{prop:connectedFiberProduct}  Suppose we have a  pullback diagram 
	\begin{equation*}
	\xymatrix
	{{W\times_Z X}\ar[r]^-{h'} \ar[d]_-{f'} & X \ar[d]^f\\
		{W} \ar[r]^{h} & Z,}
	\end{equation*} 
	and that $W\times_Z X$ is nonempty. The following properties hold:
	\begin{enumerate}[noitemsep]
		\item If $f$ is smooth and $W$ is a smooth $\kk$-scheme, $W\times_Z X$ is smooth. 
		\item If $f$ is a SDC-morphism and $W$ is irreducible, then $f'$ is also a SDC-morphism.	
		\item If $f$ is a SDC-morphism and $W$ is smooth and irreducible, then $W\times_Z X$ is smooth and irreducible. 
	\end{enumerate}
\end{proposition}
\begin{proof} 
	To simplify notation, set $V:=W\times_Z Y$.   If $f$ and $W \to \Spec \kk$ are smooth morphisms, then so is $V \to \Spec \kk$  since smoothness is preserved under composition and base-change. This proves (1).

	Now suppose $f$ is a SDC-morphism and $W$ is irreducible. So $f': V\to W$ is smooth, in particular flat. By~\cite[Exercise~III.9.1]{Hartshorne} $f'$ is also open. This means that $f'(V)$ is a nonempty open subscheme of $W$, which is dense by the irreducibility of $W$. For $w\in f'(V)$, the fiber $V_{w}$ is nonempty and isomorphic to $X_{h(w)}$, which is connected, hence (2). Statement (3) follows from this and \Cref{prop:SDCOpen}(2). 
\end{proof}

\begin{proposition} 
	\label{prop:SDCcomposition} 
	SDC-morphisms satisfy the following.
	\begin{enumerate}[noitemsep]
		\item A dominant open immersion $U \hookrightarrow X$ is a SDC-morphism.
		\item If $f:X\to Y$ and $g:Y\to Z$ are SDC-morphisms, then $gf:X\to Z$ is a SDC-morphism. 	
	\end{enumerate}
\end{proposition}

\begin{proof}
	Statement (1) is clear, so consider (2).  It is well known that smoothness and dominance are preserved under composition, so we need only show that $gf$ has connected fibers. Let $z\in Z$, and $X_z$ (resp. $Y_z$) be the scheme-theoretic fiber of $gf$ (resp. $g$) over $z$. Let $f_z:X_z\to Y_z$  be the morphism obtained by pulling back $f$ along the inclusion $Y_z \to Y$. Since $Y_z$ is smooth and connected, it is irreducible. By \Cref{prop:connectedFiberProduct}(2), $f_z$ is a SDC-morphism. Therefore $X_z$ is connected by \Cref{prop:SDCOpen}(1), as required. 
\end{proof}

Many of the limits that appear in this paper come from graphs in the following way. Let $\cC$ be a category that has finite limits (e.g., $\cC = \kSch$,  the category of $\kk$-schemes), and $G$ a connected graph, possibly with loops or multiple edges. We regard each edge $e\in E(G)$ as a pair of half-edges. Let us define a quiver $Q(G)$.  The set of vertices of $Q(G)$ is $V(G) \cup E(G)$; we write $q_{v}$ ($v\in V(G)$), resp. $q_{e}$ ($e\in E(G)$), for the corresponding vertex of $Q$. For every half edge $h$ of $e$ incident to $v$, there is an arrow $q_h:q_v\to q_e$. In particular, if $e$ is a loop edge, then there are two arrows from $q_v$ to $q_e$. Viewing $Q(G)$ as a category in the usual way, a \textit{diagram} of type $Q(G)$ in a category $\cC$ is a functor $X:Q(G) \to \cC$. We write $X_{v} = X(q_{v})$, $X_e = X(q_e)$, $\varphi_h = X(q_h)$ and $X_G = \varprojlim_{Q(G)}X$. For example, \Cref{fig:graphDiagram} exhibits a graph and its corresponding diagram.

\begin{figure}[tb]
	\begin{center}
		\begin{minipage}[c]{0.4\textwidth}
			\includegraphics[scale=0.9]{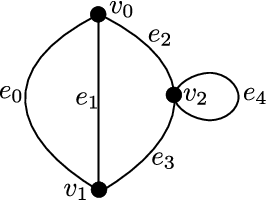}
		\end{minipage}
		\begin{minipage}[c]{0.5\textwidth}
			\begin{equation*}
			\begin{aligned}
			\xymatrix{
				& X_{v_0} \ar[dl] \ar[d] \ar[r]  & X_{e_2}  & \\
				X_{e_0}   & X_{e_1} &  X_{v_{2}} \ar[u] \ar[d] \ar@/^/[r] \ar@/_/[r] & X_{e_4} \\
				& X_{v_1}  \ar[ul] \ar[u] \ar[r]  & X_{e_3}  & 
			}
			\end{aligned}
			\end{equation*}
		\end{minipage}
	\end{center}
	
	\caption{A graph and its associated diagram}\label{fig:graphDiagram}
\end{figure}

\begin{example}
	\label{ex:diagramDualGraph}
	Let $\Gamma_{M,w}$ be the adjacency graph to a matroid subdivision $\Delta_{M,w}$.
	Let $M_v$, resp. $M_e$, denote the matroid corresponding to the vertex $v$, resp. edge $e$, of $\Gamma_{M,w}$, and $\varphi_{M_v,M_e}:\Gr_{M_{v},M_e}$ whenever $e$ is incident to $v$. The data of $\Gr_{M_v} , \Gr_{M_e}$, and $\varphi_{M_v,M_e}$ defines a diagram of type  $Q(\Gamma_{M,w})$ in $\kSch$.
\end{example}

Now, let us consider how this construction behaves with respect to contracting a connected subgraph. Let $F$ be a connected subgraph of $G$, and $G/F$ the graph obtained by contracting $F$ to a single vertex $v_{F}$.   Let $X_{F} = \varprojlim_{Q(F)} X$  and $\xi_v:X_F \to X_v$, $\xi_{e}:X_F \to X_e$ the structure morphisms. 
Set $Y_{v_{F}} = X_F$, and $Y_{v} = X_v$ for the remaining $v$ in $V(G/F)$. Similarly, let $Y_e = X_e$ for the edges $e\in E(G/F)$.   If $h$  is a half edge in $G/F$ incident $v_F$, set $\psi_h = \varphi_h\xi_v$. Otherwise, let $\psi_h = \varphi_h$. The data $(Y_{v},Y_e,\psi_h)$ defines a diagram $Y$ of type $Q(G/F)$.

\begin{proposition} 	\label{pr:limitContractGraph}
	We have an isomorphism 
	\begin{equation*}
	\varprojlim_{G} X \cong \varprojlim_{G/F} Y.
	\end{equation*}	
\end{proposition}

\begin{proof}
	
	To simplify notation, set $Y_{G/F} = \varprojlim_{G/F} Y$. Let  $\lambda_v: Y_{G/F} \to Y_v$, $\lambda_e: Y_{G/F} \to Y_e$ denote the  structure morphisms of this limit.	
	We show that $Y_{G/F}$ satisfies the universal property for $\varprojlim_{G} X$. First, we must define morphisms $\alpha_v: Y_{G/F} \to X_v$, $\alpha_e: Y_{G/F} \to X_{e}$ that commute with each $\varphi_h$. This is achieved by setting  $\alpha_v = \xi_v\lambda_{v_F}$, (resp. $\alpha_e = \xi_e\lambda_{v_F}$) when $v\in V(F)$ (resp. $e\in E(F)$), and   $\alpha_v = \lambda_v$, $\alpha_e = \lambda_e$ otherwise.  One may verify that $\varphi_h \alpha_v = \alpha_e$.

	Now suppose that we have a collection of morphisms  $\theta_v: M\to X_v$ and $\theta_e:M\to X_e$ such that $\varphi_h\theta_v = \theta_e$ for every $q_h:q_v\to q_e$ in $Q(G)$. We will show that there is a unique morphism $\theta: M \to Y_{G/F}$ such that 
	\begin{equation}\label{eq:alphaTheta}
	\theta \alpha_{v} = \theta_v \text{ and } \theta \alpha_{e} = \theta_e	
	\end{equation}
	for all $v\in V(G)$ and $e\in E(G)$, respectively.
	By the universal property of $X_F$, there is a unique morphism $\theta_{v_F}: M \to Y_{v_F}$ such that $\xi_v\theta_{v_F} = \theta_v$ and $\xi_e\theta_{v_F} = \theta_e$. If $h$ is a half edge in $G/F$ incident to $v_F$, then 
	\begin{equation*}
	\psi_h \theta_{v_F} = \varphi_h\xi_v\theta_{v_F} = \varphi_h\theta_v = \theta_e.
	\end{equation*}
	Otherwise, $\psi_h \theta_v = \theta_e$ since $\psi_h = \varphi_h$. By the universal property of $Y_{G/F}$, there is a unique morphism $\theta:M\to Y_{G/F}$ satisfying $\lambda_v \theta= \theta_v$ and $\lambda_e \theta= \theta_e$. 
	
	Now we establish the equalities in~\Cref{eq:alphaTheta}. 	 When $v\in V(F)$,  
	\begin{equation*}
	\alpha_v\theta  = \xi_v \lambda_{v_F}\theta = \xi_v \theta_{v_F} = \theta_v.
	\end{equation*} 
	A similar argument shows that $\alpha_e\theta = \theta_e$ when $e\in E(F)$. The cases where $v\in V(G)\setminus V(F)$ or $e\in  E(G)\setminus E(F)$ follow from the identifications $\alpha_v = \lambda_v$ and $\alpha_e = \lambda_e$. 
	Finally, the uniqueness of $\theta$ follows from the uniqueness of $\theta_F$ and the universal property of $Y_{G/F}$. 
\end{proof}

\begin{proposition}
	\label{lm:limitTree}
	Suppose $G$ is a tree, $X$ a diagram of type $Q(G)$ in $\kSch$ such  $X_v$ and $X_e$ are smooth and irreducible $\kk$-schemes, and for each half edge $h$, $X(h):X_v\to X_e$ is a SDC-morphism. Then $X_G$ is smooth and irreducible. Moreover, 
	\begin{equation*}
	\dim X_G = \sum_{v\in V(G)} X_v - \sum_{e\in E(G)} X_e.
	\end{equation*}
\end{proposition}

\begin{proof}
	We proceed by induction on the number of vertices. When $G$ consists of a single vertex, there is nothing to show. Now suppose that the lemma is true for all trees with fewer vertices than $G$. Let $w$ be a one valent vertex of $G$, $e$ the adjacent edge, and $G'$ the graph consisting of the remaining vertices and edges. By~\Cref{pr:limitContractGraph}, 
	\begin{equation*}
	X_G \cong X_w \times_{X_{e}} X_{G'}.
	\end{equation*}
	It is smooth and irreducible by \Cref{prop:connectedFiberProduct} and the inductive hypothesis. Because $X_w\to X_e$ is smooth of relative dimension $\dim X_{w} - \dim X_e$, so is $X_G  \to X_G'$, and therefore 
	\begin{equation*}
	\dim X_G = \dim X_w - \dim X_e + \dim X_{G'}
	\end{equation*}
	by \cite[Corollary~9.6]{Hartshorne}. By the inductive hypothesis, we get the required formula for $\dim X_G$.
	\end{proof}

\begin{remark}
	\label{rmk:equalizerCounterexample}
	An arbitrary finite limit over a diagram of smooth and irreducible $\kk$-schemes in which every morphism is SDC-need not be irreducible. 
	Let $h =x^2-y^2+x+\frac{1}{4}$ and  $X = \Spec((h)^{-1} \kk[x,y])$.  Define two morphisms $f,g:X \to \Spec \kk[z]$ by:
	\begin{align*}
	f^{\#}(z) = x^2-y^2 + x && g^{\#}(z) = x.
	\end{align*}
	One may verify that $f$ and $g$ are SDC-morphisms between smooth and irreducible $\kk$-schemes. However, the equalizer of $f$ and $g$ is 
	\begin{equation*}
	\Spec((h)^{-1}\kk[x,y]/ \langle x^2-y^2 \rangle)
	\end{equation*}
	which is neither smooth nor irreducible. 
\end{remark}

We end with a proposition on when a closed immersion of affine schemes is an isomorphism. 

\begin{proposition}
	\label{prop:closedImmersionIso}
	Suppose $\varphi:X\hookrightarrow Y$ is a closed immersion of affine schemes, and $Y$ is integral. If $\dim X = \dim Y$, then $\varphi$ is an isomorphism.
\end{proposition} 

\begin{proof}
	Let $n$ be the Krull dimension of $X$ and $Y$. Because $\varphi$ is a closed immersion and $Y$ is integral, the induced morphism on rings is of the form $\varphi^{\#}:R \to R/I$ for some integral domain $R$ and ideal $I\subset R$. A maximal chain of prime ideals $\mathfrak{p}_0 \subsetneq \mathfrak{p}_1\subsetneq \cdots \subsetneq \mathfrak{p}_n$ in $R/I$ lifts to a maximal chain of prime ideals  $\mathfrak{q}_0 \subsetneq \mathfrak{q}_1\subsetneq \cdots \subsetneq \mathfrak{q}_n$ in $R$ with $I\subset \mathfrak{q_0}$. Because $R$ is an integral domain, $\mathfrak{q_0} = \langle 0\rangle$.  So $I = \langle 0 \rangle$, and therefore $\varphi^{\#}$ is the identity.   
	\end{proof}

\section{Data for \Cref{lm:centerlimit}}
\label{appendix:37Data}

In \Cref{table:uniqueIdeals}, we list the ideals that appear as $J_{M,w}^x$ for subdivisions of $\Delta_{M,w}$ such that $M$ is a simple, connected $\kk$-realizable $\mat{3}{7}$-matroid and $\Gamma_{M,w}$ has no leaves, as in the proof of \Cref{lm:centerlimit}. We consider all of these as ideals in the ring 
\begin{equation*}
\kk[x_{ij}^{\pm} \ | \ 0\leq i \leq 2,\ 0\leq j\leq 3].
\end{equation*}
We write $\kk[x_{ij}^{\pm}]$ for short. Many of the polynomials that appear are of the form $X_{ij,k\ell} := x_{ik}x_{j\ell} - x_{i\ell}x_{jk}$. In the second column, we list variables that may be eliminated to produce an isomorphism of $\kk[x_{ij}^{\pm}]/J_{M,w}^x$ with a Laurent polynomial ring.  
For example, consider the last row. In this case, 
\begin{equation*}
J_{M,w}^x = \langle X_{01,23},X_{02,03},X_{12,12},X_{12,02},X_{12,01} \rangle.
\end{equation*}
We use $X_{01,23}$ to solve for $x_{02}$, $X_{02,03}$ for $x_{03}$, $X_{12,12}$ for $x_{11}$, and finally $X_{12,02}$ for $x_{22}$. This produces an isomorphism $\kk[x_{ij}^{\pm}]/J_{M,w}^x \to \kk[x_{ij}^{\pm} \, | \, ij\neq 02,03,11,22]$.

\begin{table} 	
	\centering
	\begin{tabular}{|l|l| } 
		\hline
		$\!$Ideals & $x_{ij}$ to eliminate \\
		\hline \hline
$\!X_{12,01}$      &   $x_{21}$  \\  \hline
$\!X_{01,12}$      &   $x_{12}$  \\  \hline
$\!X_{12,12}$      &   $x_{22}$  \\  \hline
$\!X_{02,23}$      &   $x_{23}$  \\  \hline
$\!X_{01,03}$      &   $x_{13}$  \\  \hline
$\!X_{01,23}$      &   $x_{13}$  \\  \hline
$\!X_{12,23}$      &   $x_{23}$  \\  \hline
$\!X_{01,02}$      &   $x_{12}$  \\  \hline
$\!x_{02}x_{10}x_{21}+x_{00}x_{11}x_{22}$      &  $x_{22}$  \\   \hline
$\!x_{00}x_{12}x_{21}+x_{01}x_{10}x_{22}$      &  $x_{22}$  \\   \hline
$\!x_{00}x_{13}x_{22}+x_{02}x_{10}x_{23}$      &  $x_{23}$  \\   \hline
$\!x_{02}x_{10}x_{21}-x_{00}x_{12}x_{21}-x_{01}x_{10}x_{22}$     &  $x_{22}$   \\  \hline
$\!x_{02}x_{11}x_{20}-x_{01}x_{12}x_{20}-x_{00}x_{11}x_{22}$      &  $x_{22}$  \\   \hline
$\!x_{03}x_{10}x_{22}-x_{00}x_{13}x_{22}-x_{02}x_{10}x_{23}$      &  $x_{23}$  \\   \hline
$\!x_{03}x_{10}x_{22}-x_{00}x_{13}x_{22}+x_{00}x_{12}x_{23}$      &  $x_{23}$  \\   \hline
$\!x_{02}x_{10}x_{21}-x_{00}x_{12}x_{21}-x_{01}x_{10}x_{22}+x_{00}x_{11}x_{22}$      &   $x_{11}$  \\  \hline
$\!X_{02,23},X_{12,01}$      &  $x_{21}, x_{23}$   \\  \hline
$\!X_{02,02},X_{12,01}$      &  $x_{21}, x_{22}$   \\  \hline
$\!X_{01,03},X_{12,01}$      &  $x_{13}, x_{21}$   \\  \hline
$\!X_{02,03},X_{12,01}$      &  $x_{21}, x_{23}$   \\ \hline
$\!X_{02,13},X_{01,02}$      &  $x_{12}, x_{23}$   \\  \hline
$\!X_{02,12},X_{12,01}$      &  $x_{01}, x_{22}$   \\  \hline
$\!X_{02,13},X_{12,01}$      &  $x_{10}, x_{23}$   \\  \hline
$\!X_{01,13},X_{01,02}$      &  $x_{12}, x_{13}$   \\  \hline
$\!X_{12,13},x_{00}x_{12}x_{21}+x_{01}x_{10}x_{22}$      &  $x_{22}, x_{23}$   \\  \hline
$\!X_{01,13},x_{02}x_{10}x_{21}-x_{00}x_{12}x_{21}-x_{01}x_{10}x_{22}$      &   $x_{13}, x_{22}$   \\ \hline
$\!X_{12,13},x_{02}x_{10}x_{21}-x_{01}x_{10}x_{22}+x_{00}x_{11}x_{22}$      &  $x_{02}, x_{23}$    \\ \hline
$\!X_{01,12},x_{03}x_{10}x_{22}-x_{00}x_{13}x_{22}-x_{02}x_{10}x_{23}$      &  $x_{12}, x_{23}$    \\ \hline
$\!X_{02,03},X_{01,12},X_{12,01}$      &   $x_{12},x_{21},x_{23}$   \\ \hline
$\!X_{01,13},X_{02,02},X_{12,01}$      &   $x_{13},x_{22},x_{21}$   \\ \hline
$\!X_{02,13},X_{02,02},X_{12,01}$      &   $x_{10},x_{22},x_{23}$   \\ \hline
$\!X_{12,12},X_{12,02},X_{12,01}$      &   $x_{22}, x_{21}$   \\ \hline
$\!X_{12,03},X_{01,23},x_{02}x_{11}x_{20}+x_{01}x_{10}x_{22}-x_{00}x_{11}x_{22}$      &   $x_{00}, x_{12}, x_{23}$  \\  \hline
$\!X_{01,03},X_{02,02},-X_{12,01}-x_{02}x_{13}x_{21}+x_{03}x_{11}x_{22}$      &  $x_{00}, x_{01}, x_{13}$   \\  \hline
$\!X_{02,03},X_{12,12},X_{12,02},X_{12,01}$      &  $x_{11},x_{22},x_{23}$   \\  \hline
$\!X_{02,13},X_{01,12},X_{12,01},x_{03}x_{12}x_{20}-x_{02}x_{10}x_{23}$      &     $x_{12},x_{21},x_{23}$    \\   \hline
$\!X_{02,02},X_{12,01},x_{01}x_{13}x_{22}+x_{02}x_{11}x_{23},x_{01}x_{13}x_{20}+x_{00}x_{11}x_{23}\!$  
   &  $x_{00}, x_{10}, x_{22}$   \\  \hline
$\!X_{12,23},X_{02,13},X_{02,02},X_{12,01},$      &   $x_{00}, x_{13}, x_{20}, x_{22},\!$ \\
$x_{00}x_{03}x_{11}x_{12}-x_{01}x_{02}x_{10}x_{13}$& $x_{23}$  \\  \hline
$\!X_{01,23},X_{02,03},X_{12,12},X_{12,02},X_{12,01}$      &  $x_{02}, x_{03}, x_{11}, x_{22}\!$   \\  \hline
	\end{tabular}
	\caption{Here are the unique ideals nontrivial ideals that appear in the proof of \Cref{lm:centerlimit}}
	\label{table:uniqueIdeals}
\end{table}

\newpage

\section{Maps between thin Schubert cells and inverse limits (written by  Mar\'ia Ang\'elica Cueto)} 
\label{appendix:MAC}

In this appendix, we discuss how to reduce the study of geometric properties of thin Schubert cells to the case of  simple and connected matroids.  Because the only simple rank 2 matroid is $U(2,n)$, this analysis gives us  a complete understanding of $\Gr_M$ in the rank 2 case, and simplifies the study of rank $3$ matroids in Sections \ref{sec:smoothnessOfTSC} and \ref{sec:smoothnessMorphisms}. In the following subsection, we show that the limit of thin Schubert cell $\Gr_{M,w}$ induced by a matroid subdivision $\Delta_{M,w}$ depends only on the adjacency graph of $\Delta_{M,w}$. This allows one to apply the results from \Cref{appendix:SDC} to study $\Gr_{M,w}$ as in \Cref{sec:smoothIrreducibleDeg}.

\subsection{Reduction to simple and connected matroids}

The following two Lemmas demonstrate that thin Schubert cells are compatible with decomposition into connected components and  removal of loops and parallel elements.  \Cref{lem:reduceToConnected} appears \cite[Proposition~9.4]{KatzMatroid} without proof, and \Cref{lem:parallelTSC} will appear in an upcoming paper \cite{cueto}. For the reader's convenience, we sketch their proofs.

\begin{lemma}\label{lem:reduceToConnected}
If $M = M_1\oplus M_2$, then  $\Gr_{M} \cong \Gr_{M_1}\times \Gr_{M_2}$. In particular,  $\Gr_{M} \cong \Gr_{M|T}$ where $T\subset [n]$ is the set of non-loop elements.
\end{lemma}

\begin{proof}
	Suppose $X_1$ and $X_2$ are matrices giving rise to the rings $R_{M_1}^{x}$, $R_{M_2}^{x}$ as in \Cref{construction}.  Let $X$ be the block matrix with $X_1$ and $X_2$ on the diagonal. Then $R_M^x \cong R_{M_1}^{x} \otimes R_{M_2}^{x}$. The second statement follow from  $M \cong M|T \oplus U(0,|T|)$.
\end{proof}

Given a matroid $M$, we define a simple matroid by removing loops and parallel elements in the following way. Let $\eta_1,\ldots,\eta_k$ be the rank 1 flats of $M$, choose nonloop elements $s_i\in \eta_i$ and set $S = \{s_1,\ldots, s_k\}$. Then $M|S$ is a simple matroid. Let $\ell$ be the number of loops in $M$.

\begin{lemma}
	\label{lem:parallelTSC}
	We have an isomorphism $\Gr_{M} \cong \Gr_{M|S} \times \GGm^{n-k-\ell}$. 
\end{lemma}

\begin{proof}
	By \Cref{lem:reduceToConnected}, we may assume that $M$ has no loops. 
	Suppose  $i,j$ are parallel in $M$, then $\Gr_{M} \cong \Gr_{M_{[n]\setminus i}} \times \GGm$.
	 Suppose $i$ and $j$ are parallel, and let $\mu_1,\mu_2\in {[n]\choose d-1}$ such that $\mu_k\cup\{i\}$ and $\mu_k\cup\{j\}$  are bases of $M$ for $k=1,2$. The quadratic generator from \Cref{eqn:lambdamui} yields
	 \begin{equation*}
	 \qp{M}{\mu_1\cup\{i,j\}}{\mu_2} = p_{\mu_1\cup i} p_{\mu_2\cup j} - p_{\mu_1\cup j} p_{\mu_2\cup i}
	 \end{equation*}  
	 This means that $p_{\mu\cup i}/p_{\mu\cup j}$ is independent of $\mu$. At the level of rings, the desired isomorphism $R_{M_{[n]\setminus i}} \otimes \kk[t^{\pm}] \cong R_{M} $ is given by
	\begin{align*}
	R_{M_{[n]\setminus i}} \otimes \kk[t^{\pm}] &\longrightarrow R_{M}   \\
	p_{\lambda}\otimes 1 &\mapsto p_{\lambda} \;\;\;\;\;\;\;\; \;\;\;\;\;\;\; \text{ if } i\notin \lambda,  \\
	1 \otimes t &\mapsto p_{\mu \cup i}/p_{\mu\cup j}  \;\;\;\; \text{ if } \lambda = \mu \cup \{i\}.
	\end{align*} 
	The Lemma now follows by induction on the number of parallel elements in $M$.
\end{proof}

Because the uniform matroid $U(2,n)$ is the only simple $(2,[n])$-matroid, and affine coordinates realize $\GrU{2}{n}$ as a open subscheme of an algebraic torus, we have the following. 

\begin{proposition}
	\label{prop:rk2smooth}
	If $M$ is a rank 2 matroid then
	\begin{equation} 
	\label{eqn:rk2decomp}
	\Gr_M \cong \GrU{2}{k} \times \GGm^{n-k-\ell}.
	\end{equation}
	where $k$ is the number of rank 1 flats and $\ell$ the number of loops. In particular, $\Gr_{M}$ is smooth and irreducible. 
\end{proposition}

Next, we  show the morphisms $\varphi_{M,M'}:\Gr_{M} \to \Gr_{M'}$ are compatible with the following operations: decomposition of matroids into connected components, removal of loops and parallel elements, and duality.  This will allow us to restrict our attention to pairs $M'\leq M$ where $M$ is simple, connected, and $d = r_M([n]) \leq \lfloor n/2\rfloor$.

\begin{lemma}\label{lm:connectedAndVarphi} If $M'\leq M$ and $M=M_1 \oplus M_2$, then $M' = M'_1 \oplus M'_2$ with $M'_i\leq M_i$ for $i=1,2$. Furthermore, $\varphi_{M,M'} = \varphi_{M_1,M'_1} \times \varphi_{M_2,M'_2}$.
\end{lemma}
\begin{proof} Recall that $\Delta_M= \Delta_{M_1} \times \Delta_{M_2}$ if and only if $M=M_1\oplus M_2$. Thus,  a face of $\Delta_M$ must be of the form  $\Delta_{M_1'}\times \Delta_{M_2'}$ for $M_i'\leq M_i$ for $i=1,2$ so $M'=M_1'\oplus M_2'$. The statement regarding $\varphi_{M,M'}$ follows by combining this decomposition with~\Cref{prop:facemapsA} and \Cref{lem:reduceToConnected}.
\end{proof}

\begin{lemma}\label{lm:simpleReduction}  If $M' \leq M$, then we have $M'|S \leq M|S$ and the restrictions fit into the commutative diagram:
	\begin{equation}
		\begin{aligned}
			\label{eq:diagramSimplematroids}\xymatrix{ \tsc{M} \ar[r]^-{\cong} \ar[d]_{\varphi_{M,M'}} & \tsc{M|S} \times \GG_m^{n-k-\ell}\ar[d]^{\varphi_{M|S,M'|S}\times id} \\
				\tsc{M'} \ar[r]^-{\cong}  &  \tsc{M'|S} \times \GG_m^{n-k-\ell}.
			}
		\end{aligned}
	\end{equation}
\end{lemma} 
\begin{proof}   The top horizontal map in~\eqref{eq:diagramSimplematroids} arises from the isomorphism  $\tsc{M}\simeq \tsc{M|S} \times \GG_m^{n-k-\ell}$ described in~\Cref{lem:parallelTSC}. Since $M'\leq M$, rank-one flats in $M$ yield rank-one flats  in $M'$ we have $M'|S \leq M|S$. The same lemma yields $\tsc{M} \cong \tsc{M'|S} \times \GG_m^{n-k-\ell}$. This determines  the bottom horizontal map. 
\end{proof}

Now we ensure the compatibility of $M'\leq M$ with the duality operation and the isomorphism 
\begin{equation*}
\psi:\Gr(d,n)\to \Gr(n-d,n) \hspace{10pt} (p_\beta)_{\beta}\mapsto ((-1)^{\sign(\beta,\beta^c)} p_{\beta^c})_{\beta^c}
\end{equation*}
induced from $\PP(\wedge^d\kk^n) \cong \PP(\wedge^{n-d}\kk^n)$. Here, $(\beta,\beta^c)$ is a permutation of $S_{n}$ in one-line notation and $\beta^c = [n]\smallsetminus \beta$. On affine patches, the correspondence for matrices is explicit: for example, a $d\times n$ matrix $(I_d|X)$ in $\{p_{[d]}\neq 0\}$ is identified with the $(n-d)\times n$ matrix $(-X^{t}|I_{n-d})$  in $\{p_{[d]^c}\neq 0\}$.

\begin{lemma}\label{lm:DualsRespectFaces} If $M'\leq M$ then $(M')^*\leq M^*$ and $\varphi_{M^*,(M')^*} = \psi \circ \varphi_{M,M'} \circ \psi^{-1}$.
\end{lemma}
\begin{proof} By definition, $\Delta_{M^*} = \conv(\{\mathbf{1}-e_\beta \colon \beta \in \cB(M)\}) = \mathbf{1} - \Delta_M$. In particular, if $\Delta_{M'}$ is a  face of $\Delta_M$, then $\Delta_{(M')^*} = (\mathbf{1} - \Delta_{M'}) \prec  (\mathbf{1} - \Delta_{M}) = \Delta_{M^*}$, as required. The isomorphism $\psi$ identifies each $p_\beta \in \kk[\GrC{d}{n}]$ with $p_{\beta^c} \in \kk[\GrC{n-d}{n}]$. The expression $\varphi_{M^*,(M')^*} = \psi \circ \varphi_{M,M'} \circ \psi^{-1}$ follows from this observation.
\end{proof}

As an application, we prove that $\varphi_{M,M'}$ is a SDC-morphism whenever $M=U(d,n)$ or  $M$ is a rank $2$ matroid.

\begin{proposition}\label{pr:uniform}
	For any $M'\leq M:=U(d,n)$, the map  $\varphi_{M,M'}\colon \GrU{d}{n}\to \tsc{M'}$ is a SDC-morphism. 
\end{proposition}

\begin{proof} By \Cref{prop:SDCcomposition}, it suffices to show that $\varphi_{M,M'}$ is a SDC morphism when $M'\lessdot M$. The nondegenerate subsets of $M$ are of the form $\{i\}$ or $[n]\setminus i$ for some $i \in [n]$. If $M' = M_{\{i\}}$, then $(M')^* = M^*_{[n]\setminus i}$ where $M^* \cong U(n-d,n)$. By \Cref{lm:DualsRespectFaces}, it suffices to consider just $M'=M_{[n]\setminus \{i\}}$.  
In this case, $R_{M'}^x = (S_{M'}^x)^{-1}B_{M'}$ and $R_{M'}^x = (S_{M}^x)^{-1}R_{M'}^x[x_{i,n-4}^{\pm} \, | \, i\in [d]\,]$. Therefore $\Gr_{M} \subset \GGm^{d\times (n-d)}$, $\Gr_{M} \subset \GGm^{d\times (n-d-1)}$ as open subvarieties, and $\varphi_{M,M'}$ is induced by a coordinate projection $\GGm^{d\times (n-d)} \to \GGm^{d\times (n-d-1)}$, which is clearly a SDC-morphism.  The result now follows from  \autoref{prop:SDCcomposition}.
\end{proof}

\begin{proposition}\label{prop:rk2smoothmap}
	For $(2,[n])$-matroids $M'\leq M$, $\varphi_{M,M'} $ is a SDC morphism. 
\end{proposition}

\begin{proof}
	Because every simple rank $2$ matroid is uniform, the Proposition follows from Lemmas \ref{lm:connectedAndVarphi}, \ref{lm:simpleReduction}, and \ref{pr:uniform}.
\end{proof}

Our final result in this subsection say that the reduction to simple matroids as above is compatible taking initial degenerations and inverse limits.

\begin{proposition}
	\label{lm:simpleIsEnoughInitialDeg} 
	Fix  $w\in \TGr_M$ and let $\tilde{w}$ be the projection of $w$ to $\RR^{\cB(M|S)}/\rspan{\mathbf{1}}$. Then $\tilde{w} \in \TGr_{M|S}$ and
	\begin{align*} \init_{w} \tsc{M} \simeq  \init_{\tilde{w}} \tsc{M|S} \times \GGm^{n-k-\ell} && \Gr_{M,w} \cong  \Gr_{M|S,\tilde{w}} \times \GGm^{n-k-\ell}
	\end{align*}
\end{proposition}

\begin{proof}  The assertion on initial degenerations follows from the fact that the isomorphism $\Gr_M \cong \Gr_{M|S} \times \GGm^{n-k-\ell}$ from \Cref{lem:parallelTSC} is induced by a monomial map on coordinate rings.  The isomorphism of limits follows from this Lemma and the description of the coordinate ring of $\Gr_{M,w}$ in \Cref{prop:limitideal}.
\end{proof}

\subsection{Limits of thin Schubert cells via adjacency graphs}

\smallskip

Recall that the matroid subdivision $\Delta_{M,w}$ yields a system of maps $\varphi_{M_Q,M_{Q'}}: \Gr_{M_{Q}}\to \Gr_{M_{Q'}}$ whenever $Q'\leq Q$ that satisfy $\varphi_{M_{Q},M_{Q''}} = \varphi_{M_{Q'},M_{Q''}}\varphi_{M_Q,M_{Q'}}$ and $\varphi_{M_Q,M_Q} = \id$.  This allows us to form the limit 
\begin{equation}\label{eq:grMw}
	\tsc{M,w}:=  \invLim{M_Q}{\subd{M}{w}} \tsc{M_Q}
\end{equation}

\noindent  Rather than keeping track of the full face poset of  $\subd{M}{w}$  it is desirable to restrict ourselves to cells of codimension 0 and 1.   
The following construction mimics the definition of adjacency graphs for triangulations of polytopes~\cite[Definition 4.5.10]{DeLoeraRambauSantos}, so we use the same name. 

\begin{definition}\label{def:systemsAndSkeleton}
	Given $w$ in $\dr{M}$, let $\grdiv{M}{w}$ be the \emph{adjacency graph} of $\subd{M}{w}$ defined as follows. The graph $\grdiv{M}{w}$ has a vertex $v_{Q}$ for each $Q$ in $\TC{M,w}$. Two vertices $v_{Q_1}, v_{Q_2}$ are connected by an edge if $Q_1\cap Q_2$ is a facet of both cells. 	
	Similarly, given a cell $F$ of $\subd{M}{w}$, we let $\grdivF{M}{w}{F}$ be the full subgraph of $\grdiv{M}{w}$ generated by those vertices $v_{Q}$ of $\grdiv{M}{w}$ with $F\leq Q$. 
\end{definition}

Our next lemma shows that the graphs defined above are connected. It will play a crucial role in \autoref{pr:FromAllToCodim1Diagram} below.

\begin{lemma}\label{lm:graphsAreConnected} For any $w\in \dr{M}$ and  any cell $F$ of $\subd{M}{w}$, the graphs $\grdiv{M}{w}$ and  $\grdivF{M}{w}{F}$ are connected.
\end{lemma}
\begin{proof} The first claim follows by convexity and is valid for the adjacency graph associated to a pure-dimensional polyhedral subdivision of any polytope. We argue for $\Delta_M$ and $\subd{M}{w}$.  Indeed, given two vertices $v_{Q_1}, v_{Q_2}$ of $\grdiv{M}{w}$, choose two points $x_1,x_2$, with $x_i\in \relint(Q_i)$ so that the segment $[x_1,x_2]$ does not meet any cell whose codimension is $2$ or greater. Since $\Delta_M$ is convex, the $[x_1,x_2]$ lies in $\relint(\Delta_M)$.	
All but finitely many points in $[x_1,x_2]$ lie in the relative interior of top-dimensional cells. We label the encountered cells as we move from $x_1$ towards $x_2$ by $Q_1 =:\! Q'_0, Q'_1, \ldots, Q'_k \!:= Q_2$.  
The collection $\{Q'_i\}_{i}$ yields a path from $v_{Q_1}$ to $v_{Q_2}$ in $\grdiv{M}{w}$.

A similar argument can be used to prove the statement for $\grdivF{M}{w}{F}$. Let $E\subset \RR^n$ be the affine span of $\Delta_M$.  Given $F$ in $\subd{M}{w}$, write $s:=\dim F$ and pick a point $p$ in its relative interior. We let $H$ be the orthogonal complement to the linear  subspace $F-p$ in $E-p$, and $Q$ a $(m-s)$-dimensional cube in $H$ centered at the origin with diameter $0<\varepsilon\ll 1$.
	
We consider the full-dimensional polytope $P':=(Q+p) \cap \Delta_M$ in $H+p$, and the polyhedral subdivision on $P'$ induced by $\subd{M}{w}$.
	Each cell in this subdivision equals $Q'\cap \Delta_M$ for some $Q'$, and has dimension $(s-\dim \Delta_M +\dim Q)$. 
	By construction, a matroid  polytope $Q'$ 
	yields a vertex or edge of $\grdivF{M}{w}{F}$ if and only if $Q'\in \subd{M}{w}$ and $Q'\cap (Q+p)\neq \emptyset$. Thus, $\grdiv{M}{w}$ agrees with  the adjacency graph  of the subdivision of $P'$. Since the latter is connected by the discussion above, the result follows.
\end{proof}

The adjacency graph $\grdiv{M}{w}$ encodes a subsystem of the inverse system  $\tsc{M,w}$ from~\eqref{eq:grMw} as in \Cref{ex:diagramDualGraph}. Our final result shows that $\tsc{M,w}$ agrees with the inverse system induced by $\grdiv{M}{w}$.

\begin{proposition}\label{pr:FromAllToCodim1Diagram}
	Let $M$ be a $\kk$-realizable $(d,[n])$-matroid and $w \in \TGr_{M}$. Then,
	\begin{equation}\label{eq:InverseLimits}
		\tsc{M,w}  
		\cong \varprojlim_{\Gamma_{M,w}} \tsc{M'}.
	\end{equation}
\end{proposition}

\begin{proof} We write $\tsc{M,w}^{\Gamma}$  for the inverse limit on the right-hand side of~\Cref{eq:InverseLimits}. Given $M'$ labeling a cell of $\grdiv{M}{w}$, we write $h_{M'}^{\Gamma}\colon \tsc{M,w}^{\Gamma} \to \tsc{M'}$ for the associated morphism. 
Since $\grdiv{M}{w}$ determines a subsystem of $\subd{M}{w}$, the universal property of $\tsc{M,w}^{\Gamma}$ guarantees the existence of a morphism  $\psi: \tsc{M,w} \to \tsc{M,w}^{\Gamma}$. Next, we build a morphism $\phi: \tsc{M,w}^{\Gamma} \to \tsc{M,w}$. 
	
	First, we  construct morphisms $g_{F}\colon \tsc{M,w}^{\Gamma} \to \tsc{F}$ for each cell $\Delta_{F}$ of $\subd{M}{w}$, satisfying $g_{M''} = \varphi_{M',M''} \circ g_{M'}$ for each pair of cells in $\subd{M}{w}$ with $M'\leq M''$. The morphism $\phi$ will be unique determined once we establish the compatibility of all $g_F$'s with the subdivision $\subd{M}{w}$.
	
	Let $\mathcal{V}_F$ be the set of vertices of the graph $\grdivF{M}{w}{F}$. Set  $g_{F}:=\varphi_{M',F} \circ h_{M'}^{\Gamma}$ where $v_{M'}\in \mathcal{V}_F$. We must show this morphism is independent of our choice of $M'$. Suppose $v_{M''}\in V_F$ as well. 
	Since $\grdivF{M}{w}{F}$ is connected by~\autoref{lm:graphsAreConnected}, we can find a collection of vertices $v_{Q_{M'}}=:\!v_{Q_0}, v_{Q_1}, \ldots, v_{Q_{k}}\!:=v_{Q_{M''}}$ where $(v_{Q_i},v_{Q_{i+1}})$ is an edge of $\grdivF{M}{w}{F}$ for each $i=0,\ldots, k-1$.
	We write $M_i:=Q_{M_i}$, $M_{i(i+1)}:=M_{Q_i\cap Q_{i+1}}$, and note that $F\leq Q_i\cap Q_{i+1}$ for each $i$.
	The definition of inverse limit yields $k$ diagrams
	\begin{equation}\label{eq:facetsForPartial}
		\begin{aligned}\xymatrix{
				& \tsc{M_i}  \ar@/_/[dr]^-{\varphi_{M_i,M_{i(i+1)}}}  \ar@/^1pc/[drrr]^-{\varphi_{M_i,F}}&  & & \\
				\tsc{M,w}^{\Gamma} \ar[ur]^-{h_{M_i}^{\Gamma}} \ar[dr]_-{h_{M_{i+1}}^{\Gamma}} \ar[rr]^-{h_{M_{i(i+1)}}^{\Gamma}} & & \tsc{M_{i(i+1)}} \ar[rr]^{\varphi_{M_{i(i+1)},F}} & & \tsc{F}, \\
				& \tsc{M_{i+1}} \ar@/^/[ur]_-{\varphi_{M_{i+1},M_{i(i+1)}}} \ar@/_1pc/[urrr]_-{\varphi_{M_{i+1},F}} & & &
			}
		\end{aligned}
	\end{equation}
	where all four triangles commute. 
	It follows that  $\varphi_{M',F} \circ h_{M'}^{\Gamma} = \varphi_{M'',F} \circ h_{M''}^{\Gamma}$, so $g_F$ is well-defined.
	
	Finally, the identity  $g_{F'} = \varphi_{F,F'}\circ g_{F}$ for each pair  $F'\leq F$ in $\subd{M}{w}$ follows from a similar commutative diagram argument after choosing a  vertex $M'$ in $\grdivF{M}{w}{F}$. These two properties determine $\phi$.
	The universal property of both schemes $\tsc{M,w}$ and $\tsc{M,w}^{\Gamma}$ ensures that $\phi = \psi^{-1}$, as desired.
\end{proof}

\bibliographystyle{abbrv}
\bibliography{bibliographie}
\label{sec:biblio}
\bigskip

\end{document}